\definecolor {refcol}{RGB}{40,0,255}
\newfont{\footsc}{cmcsc10 at 8truept}
\newfont{\footbf}{cmbx10 at 8truept}
\newfont{\footrm}{cmr10 at 10truept}
\newtheorem{theorem}{Theorem}
\newtheorem{conjecture}{Conjecture}
\newtheorem{corollary}{Corollary}
\newtheorem{example}{Example}
\newtheorem{lemma}{Lemma}
\newtheorem{proposition}[theorem]{Proposition}
\newtheorem{remark}{Remark}
\newenvironment{proof}[1][Proof]{\noindent{\textbf {#1}}}  {\hfill$\Box$\bigskip}
\begin{document}
\title{\textbf{New bounds on the distance Laplacian and distance signless Laplacian spectral radii}}
\author{Roberto C. D\'{i}az, Ana I. Julio and Oscar Rojo \thanks{Department of
Mathematics, Universidad Cat\'{o}lica del Norte, Antofagasta, Chile}}

\date{}
\maketitle

\maketitle
\begin{abstract}
Let $G$ be a simple undirected connected graph. In this paper, new upper bounds on the distance Laplacian spectral radius of $G$ are obtained. Moreover, new lower and upper bounds for the distance signless Laplacian spectral radius of $G$ are derived. Some of the above mentioned bounds are sharp and the graphs attaining the corresponding bound are characterized. Several illustrative examples are included.
\end{abstract}

\textbf{AMS classification:} \textit{05C50, 15A48}

\textbf{Keywords:} \textit{distance matrix; vertex transmission; Wiener index; distance Laplacian matrix; distance signless Laplacian matrix; spectral radius}
\section{Introduction}

%Let $G=\left( V(G),E(G) \right) $ be a simple undirected graph on $n$ vertices with vertex set $V(G)=\{v_1,\ldots,v_n\}$ and edge set $E(G)$. Let
%$D(G)$ be the diagonal matrix of order $n$ whose $(i,i)-$entry is the degree of the vertex $v_i$ of $G$ and let $A\left( G\right)$
%be the adjacency matrix of $G.$
%The matrices $L(G)=D(G)-A(G)$ and $Q(G) =D(G)+A(G)$ are the Laplacian and signless Laplacian
%matrix of $G$, respectively. The matrices $L(G)$ and $Q(G)$ are both positive semidefinite and $\left( 0,\mathbf{1}\right)$ is an eigenpair of $L\left( G\right)$ where $\mathbf{1}$ is the all ones vector.

Let $G=\left( V(G),E(G) \right) $ be a simple undirected graph on $n$ vertices with vertex set $V(G)=\{v_1,\ldots,v_n\}$ and edge set $E(G)$.

%Throughout this paper, we assume that $G$ is a connected graph and $\rho(M)$ denotes the spectral radius of the matrix $M$.
%Let
%\begin{equation*}
%  \lambda_1(G) \geq \lambda_2(G) \geq \ldots \geq \lambda_n(G)
%\end{equation*}
%be the eigenvalues of $A(G)$. They are called the eigenvalues of $G$ or the $A-$ eigenvalues of $G$. Let
%\begin{equation*}
%   \mu_1(G) \geq \mu_2(G) \geq \ldots \geq \mu_n(G)=0
%\end{equation*}
%and
%\begin{equation*}
%   q_1(G) \geq q_2(G) \geq \ldots \geq q_n(G)
%\end{equation*}
%be the eigenvalues of $L(G)$ and $Q(G)$, respectively. They are called the Laplacian eigenvalues of $G$ or the $L-$ eigenvalues of $G$ and the signless Laplacian eigenvalues of $G$ or the $Q-$ eigenvalues of $G$, respectively. It is well know that the Laplacian eigenvalues and signless eigenvalues of $G$ coincide if and only if $G$ is a bipartite graph.

The distance between $u,v \in V(G)$ for a connected graph $G$, denoted by $d(u,v)$, is the length of the shortest path connecting $u$ and $v$. The Wiener index $W(G)$ of a connected graph $G$ is
\begin{equation*}
  W(G) = \frac{1}{2} \sum_{u,v \in V(G)}d(u,v)
\end{equation*}
and the transmission $Tr(v)$ of a vertex $v \in V(G)$ is the sum of the distances from $v$ to all other vertices of $G$, that is,
\begin{equation*}
  Tr(v)= \sum_{u \in V(G)} d(v,u).
\end{equation*}

The graph $G$ is said to be $k-$ \emph{transmission regular} if $Tr(v)=k$ for each vertex $v \in V(G)$.

The \emph{distance matrix} of a graph $G$ of order $n$ is the $n \times n$ matrix $\mathcal{D}(G) = \left(d_{i,j}\right)$, indexed by the vertices of $G$,
where $d_{i,j}=d(v_{i},v_{j})$. Two of the oldest works on the distance matrix are \cite{schonberg1935} (1935) and \cite{young1938} (1938). In \cite{hakimi_yau_64} (1964), the authors ask for the conditions under which a given a square symmetric matrix
with real and nonnegative entries is realized as the distance matrix of a graph. Different realizability problems of distance matrices are investigated in \cite{spereira90,spereira87,spereira69,spereira66}, \cite{Buneman74}, \cite{varone98} and \cite{boesh68-69}, among many others papers. A remarkable result on the distance matrix, due to Graham and Pollak \cite{graham_pollak71} (1971), is : If $T$ is a tree of order $n \ge 2$ with distance matrix $\mathcal{D}(T)$, then
\begin{equation*}
\text{det} \mathcal{D}(T) = (-1)^{n-1}\left(n-1\right)2^{n-2}. \label{graham_pollak}
\end{equation*}
Thus, the determinant of the distance matrix of a tree depends only on its order. The result established by Graham and Pollak attracted much interest among algebraic graph theory researchers.

The eigenvalues of $\mathcal{D}(G)$ are called the \emph{distance eigenvalues} of $G$ and denoted by
\begin{equation*}
\partial_1(G) \geq \partial_2(G)\geq \ldots \geq \partial_n(G).
\end{equation*}
 Some properties about the eigenvalues of the distance matrix are already known. For example, in \cite{Zho}, the author characterizes the graphs with minimal spectral radius of the distance matrix in three classes of simple connected graphs with $n$ vertices: with fixed vertex connectivity, matching number and chromatic number, respectively. In \cite{lin}, the authors characterize all connected graphs with \ $\partial_{n}(G)=-2$. Furthermore, they characterize all connected graphs of diameter $2$ with exactly three distance eigenvalues when \ $\partial_{1}(G)$ is not an integer. They also conjecture that the complete $k$-partite graph is determined by its distance spectrum. Later, in \cite{Yu}, the author determines all graphs which satisfy \ $\partial_{n}(G)\in [-2.383 \ , \ 0]$. A very complete survey of the state of the art on distance matrices up to 2014 appears in \cite{AuchichHansen2014} (see also
\cite{StevanovicIlic2010}). \\

 In \cite{Aou} Aouchiche and Hansen introduce, for a connected graph $G$, the distance Laplacian matrix $\mathcal{L}(G)$ and the distance signless Laplacian matrix $\mathcal{Q}(G)$ as follows
\begin{equation*}
  \mathcal{L}(G) = \emph{Tr}(G)-\mathcal{D}(G)
\end{equation*}
and
\begin{equation*}
  \mathcal{Q}(G) = \emph{Tr}(G)+\mathcal{D}(G)
\end{equation*}
where
\begin{equation*}
  \emph{Tr}(G)=\emph{diag}[\emph{Tr}(v_1),\emph{Tr}(v_2),\ldots,\emph{Tr}(v_n)]
\end{equation*}
 is the diagonal matrix of the vertex transmissions in $G$.

In \cite{Aou}, among other results, the above mentioned authors prove the equivalence between the distance signless Laplacian, distance Laplacian and the distance spectra for the class of transmission regular graphs. The eigenvalues of $\mathcal{L}(G)$ and $\mathcal{Q}(G)$ are called the \emph{distance Laplacian eigenvalues} and the \emph{distance signless Laplacian eigenvalues} of $G$ and they are denoted by
\begin{equation*}
  \partial_1^{L}(G) \geq \partial_2^{L}(G)\geq \ldots \geq \partial_n^{L}(G)
\end{equation*}
and
\begin{equation*}
  \partial_1^{Q}(G) \geq \partial_2^{Q}(G)\geq \ldots \geq \partial_n^{Q}(G),
\end{equation*}
respectively.

$\mathcal{L}(G)$ and $\mathcal{Q}(G)$ are both real symmetric matrices. From this fact and
Ger\v{s}gorin's Theorem, it follows that their eigenvalues are nonnegative real numbers. Let $\textbf{1}$ be the all ones vector. Clearly each row sum of $\mathcal{L}(G)$ is $0$. Then $\left(0,\mathbf{1}\right) $ is an eigenpair of $\mathcal{L}(G)$ and, since $G$ is connected graph, $0$ is a simple eigenvalue.

Moreover
\begin{equation*}
  trace(\mathcal{L}(G)) = trace(\mathcal{Q}(G)) = 2 W(G).
\end{equation*}

Clearly, if $G$ is a $k-$ transmission regular graph then
\begin{equation*}
  \mathcal{L}(G) = k I_n - \mathcal{D}(G)
\end{equation*}
and
\begin{equation*}
  \mathcal{Q}(G) =k I_n + \mathcal{D}(G)
\end{equation*}
where $I_n$ is the identity matrix of order $n$; and, for $i=1,\ldots,n$,
\begin{equation*}
  \partial_i^{L}(G) = k-\partial_{n-i+1}(G)
\end{equation*}
and
\begin{equation*}
  \partial_i^{Q}(G) = k + \partial_i(G)
\end{equation*}

From the Perron-Frobenius Theory for nonnegative matrices, we have
\begin{theorem}
\label{PF}
If $A$ is a nonnegative matrix then its spectral radius $\rho(A)$ is an eigenvalue of $A$, and it has an associated nonnegative eigenvector. Furthermore, if $A$ is irreducible then $\rho(A)$ is a simple
eigenvalue of $A$ with an associated positive eigenvector.
\end{theorem}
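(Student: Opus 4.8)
The plan is to establish the two assertions in sequence, bootstrapping from the strictly positive case. First I would prove the classical Perron result for a matrix $A$ with all entries strictly positive. Consider the continuous self-map $T(x)=Ax/(\mathbf{1}^{T}Ax)$ of the standard simplex $\Delta=\{x\ge 0:\mathbf{1}^{T}x=1\}$, which is well defined because $Ax>0$ for every $x\in\Delta$. By Brouwer's fixed-point theorem $T$ has a fixed point $x^{\ast}$, and hence $Ax^{\ast}=\lambda x^{\ast}$ with $\lambda=\mathbf{1}^{T}Ax^{\ast}>0$ and $x^{\ast}>0$. That $\lambda=\rho(A)$ follows by taking any eigenvalue $\mu$ with eigenvector $y$ and using $|\mu|\,|y_{i}|=|(Ay)_{i}|\le (A|y|)_{i}$ together with a positive left eigenvector to conclude $|\mu|\le\lambda$.

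For an arbitrary nonnegative $A$ I would pass to the limit. Apply the positive case to the strictly positive matrices $A_{\varepsilon}=A+\varepsilon J$, where $J$ is the all-ones matrix and $\varepsilon>0$, obtaining $x_{\varepsilon}\in\Delta$ with $A_{\varepsilon}x_{\varepsilon}=\rho(A_{\varepsilon})x_{\varepsilon}$. Since $\Delta$ is compact, choose $\varepsilon_{k}\to 0$ with $x_{\varepsilon_{k}}\to x^{\ast}\in\Delta$. Because the eigenvalues of a matrix depend continuously on its entries, $\rho(A_{\varepsilon_{k}})\to\rho(A)$, and passing to the limit in the eigen-equation yields $Ax^{\ast}=\rho(A)x^{\ast}$ with $x^{\ast}\ge 0$ and $x^{\ast}\neq 0$. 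This proves the first assertion.

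Now assume $A$ is irreducible, and exploit the structural fact that irreducibility is equivalent to $(I+A)^{n-1}>0$ entrywise. Applied to the eigenvector $x^{\ast}\ge 0$ found above this gives $(1+\rho(A))^{n-1}x^{\ast}=(I+A)^{n-1}x^{\ast}>0$, so $x^{\ast}>0$. For geometric simplicity, suppose a real eigenvector $y$ for $\rho(A)$ were independent of $x^{\ast}$; after possibly replacing $y$ by $-y$, set $t^{\ast}=\max\{t:x^{\ast}-ty\ge 0\}$, so that $w=x^{\ast}-t^{\ast}y$ is a nonnegative eigenvector with at least one zero coordinate. But $(I+A)^{n-1}w=(1+\rho(A))^{n-1}w$ is strictly positive unless $w=0$, forcing $w=0$ and contradicting the independence of $x^{\ast}$ and $y$. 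Hence the eigenspace of $\rho(A)$ is one-dimensional.

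The main obstacle is upgrading geometric simplicity to algebraic simplicity, since a one-dimensional eigenspace does not by itself exclude a nontrivial Jordan block. To rule this out I would use, in addition to the right Perron vector $x^{\ast}>0$, a left Perron vector $u>0$ obtained by applying the previous steps to $A^{T}$; these satisfy $u^{T}x^{\ast}>0$. If $\rho(A)$ had algebraic multiplicity larger than one, geometric simplicity would furnish a generalized eigenvector $v$ with $(A-\rho(A)I)v=x^{\ast}$; then $u^{T}(A-\rho(A)I)=0$ gives $0=u^{T}(A-\rho(A)I)v=u^{T}x^{\ast}>0$, a contradiction. Therefore $\rho(A)$ admits no generalized eigenvector beyond its eigenvectors, its algebraic and geometric multiplicities coincide and equal one, and $\rho(A)$ is a simple eigenvalue with a positive eigenvector.
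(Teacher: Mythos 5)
Your proposal is correct, but there is nothing in the paper to compare it against: Theorem \ref{PF} is stated as imported background (``From the Perron--Frobenius Theory for nonnegative matrices, we have\dots''), with no proof given --- it is the classical Perron--Frobenius theorem, available e.g.\ in Minc's monograph \cite{minc}, which the paper cites elsewhere. What you have written is a complete, self-contained proof along the standard textbook lines: Brouwer's fixed-point theorem on the simplex for the strictly positive case, a compactness/perturbation argument with $A_{\varepsilon}=A+\varepsilon J$ for general nonnegative $A$, the characterization of irreducibility via $(I+A)^{n-1}>0$ to force positivity of the eigenvector and geometric simplicity (your extremal-$t^{\ast}$ argument is sound, and you correctly flagged the replacement $y\mapsto -y$ needed so that $\{t: x^{\ast}-ty\ge 0\}$ is bounded), and the left-eigenvector pairing $u^{T}(A-\rho(A)I)v=0$ versus $u^{T}x^{\ast}>0$ to exclude a Jordan block of size at least two. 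Only a few routine details deserve an explicit sentence if this were written out in full: in the positive case, that the left Perron eigenvalue coincides with $\lambda$ (pair $u^{T}Ax^{\ast}$ both ways and use $u^{T}x^{\ast}>0$); in the simplicity step, that it suffices to consider \emph{real} eigenvectors $y$, since $\rho(A)$ is a real eigenvalue of a real matrix and its complex eigenspace has a real basis; and that $A^{T}$ inherits irreducibility from $A$ (arc reversal preserves strong connectivity), so the left Perron vector $u>0$ exists. None of these is a gap --- each is a one-line fix --- so your argument stands as a legitimate proof of a result the paper simply quotes.
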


In particular, since $\mathcal{Q}(G)$ is a positive matrix, $\rho(\mathcal{Q}(G))$ is a simple eigenvalue of $\mathcal{Q}(G)$.

The Frobenius norm of an $n \times n$ matrix $M=(m_{i,j})$ is
\begin{equation*}
  \|M\|_{F} = \sqrt{\sum_{i=1}^{n}\sum_{j=1}^{n} \vert m_{i,j} \vert ^{2}}.
\end{equation*}
We recall if $M$ is a normal matrix then
$\|M\|_{F}^{2} = \sum_{i=1}^{n} \vert \lambda_i(M)\vert^{2}$ where $\lambda_1(M),\ldots,\lambda_n(M)$ are the eigenvalues of $M$. In particular,
\begin{equation*}
  \|\mathcal{L}(G)\|_{F}^{2}=\sum_{i=1}^{n-1}(\partial_i^{L}(G))^{2}
\end{equation*}
and
\begin{equation*}
  \|\mathcal{Q}(G)\|_{F}^{2}=\sum_{i=1}^{n}(\partial_i^{Q}(G))^{2}.
\end{equation*}

%The eigenvalues of $\mathcal{L}(G)$ are called the \emph{distance Laplacian eigenvalues} and they are denoted by
%\begin{equation*}
%  \partial_1^{L}(G) \geq \partial_2^{L}(G)\geq \ldots \geq \partial_n^{L}(G)=0.
%\end{equation*}
%and
%\begin{equation*}
%  \partial_1^{\mathcal{Q}}(G) \geq \partial_2^{\mathcal{Q}}(G)\geq \ldots \geq \partial_n^{\mathcal{Q}}(G),
%\end{equation*}
%respectively.

%From the above definitions, the following lemma is immediate.
%\begin{lemma}
%\begin{equation*}
%  trace(\mathcal{L}(G)) = trace(\mathcal{Q}(G))= 2 W(G).
%\end{equation*}
%\end{lemma}

Throughout this paper, we assume that $G$ is a connected graph of order $n$ and $K_n$ denotes the complete graph on n vertices. \\

Some lower and upper bounds on $\partial_1^{L}(G)$ and $\partial_1^{Q}(G)$ are already known. The purpose of this paper is to search for new bounds on these spectral radii. Section 2, is dedicated to upper bounds on $\partial_1^{L}(G)$, we recall some known upper bounds as well as some previous results that allow us to derive new upper bounds. Finally, in Section 3, we recall some known results and we obtain new lower and upper bounds on $\partial_1^{Q}(G)$. In Section 2 as well as in Section 3, illustrative examples are given.

\section{Bounds on $\partial_1^{L}(G)$}

A basic result on $\partial_1^{L}(G)$ is given in Corollary 3.6 of \cite{Aou}:
\begin{theorem}
\label{t21}
Let $G$ be a connected graph of order $n\geq 3$. Then
\begin{equation*}
  \partial_i^{L}(G) \geq \partial_i^{L}(K_n)=n
\end{equation*}
for $i=1,2,\ldots,n-1$, and
\begin{equation*}
  \partial_n^{L}(G)=\partial_n^{L}(K_n)=0.
\end{equation*}
\end{theorem}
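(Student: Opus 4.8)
The plan is to identify the distance Laplacian spectrum of $K_n$ explicitly and then to exhibit $\mathcal{L}(G)$ as a positive semidefinite perturbation of $\mathcal{L}(K_n)$, so that a standard eigenvalue monotonicity (Weyl) argument delivers the inequalities. Since $K_n$ is $(n-1)$-transmission regular with $\mathcal{D}(K_n)=J_n-I_n$ (here $J_n=\mathbf{1}\mathbf{1}^T$ is the all-ones matrix), one has $\mathcal{L}(K_n)=(n-1)I_n-\mathcal{D}(K_n)=nI_n-J_n$. The eigenvalues of $nI_n-J_n$ are $0$, with eigenvector $\mathbf{1}$, and $n$ with multiplicity $n-1$; hence $\partial_i^{L}(K_n)=n$ for $i=1,\ldots,n-1$ and $\partial_n^{L}(K_n)=0$, in agreement with the transmission-regular formula $\partial_i^{L}(K_n)=(n-1)-\partial_{n-i+1}(K_n)$ recalled in the excerpt.

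For a general connected $G$ the key is a decomposition built from the elementary fact that $d(v_i,v_j)\ge 1$ whenever $i\ne j$. Writing $\mathcal{D}(G)=(J_n-I_n)+R$, where $R=(r_{ij})$ with $r_{ij}=d(v_i,v_j)-1\ge 0$ for $i\ne j$ and $r_{ii}=0$, the matrix $R$ is symmetric and nonnegative. The transmissions then read $Tr(v_i)=(n-1)+s_i$ with $s_i=\sum_{j}r_{ij}$, so that $Tr(G)=(n-1)I_n+S$ with $S=\mathrm{diag}(s_1,\ldots,s_n)$. Substituting into $\mathcal{L}(G)=Tr(G)-\mathcal{D}(G)$ gives
\begin{equation*}
\mathcal{L}(G)=\bigl(nI_n-J_n\bigr)+\bigl(S-R\bigr)=\mathcal{L}(K_n)+\bigl(S-R\bigr).
\end{equation*}

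Next I would check that $S-R$ is positive semidefinite. Indeed $S-R$ is the Laplacian of the weighted graph on $V(G)$ with edge weights $r_{ij}\ge 0$, and for any $x\in\mathbb{R}^n$ one has $x^{T}(S-R)x=\tfrac12\sum_{i,j}r_{ij}(x_i-x_j)^2\ge 0$. With $S-R\succeq 0$ in hand, Weyl's inequality for sums of symmetric matrices yields $\partial_i^{L}(G)\ge \partial_i^{L}(K_n)$ for every $i$, since the least eigenvalue of $S-R$ is nonnegative. In particular $\partial_i^{L}(G)\ge \partial_i^{L}(K_n)=n$ for $i=1,\ldots,n-1$. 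Finally, the equality $\partial_n^{L}(G)=0$ follows from the facts recalled just before the statement: $\mathbf{1}$ lies in the kernel of $\mathcal{L}(G)$, so $0$ is an eigenvalue, while all eigenvalues of $\mathcal{L}(G)$ are nonnegative; hence the least one is $\partial_n^{L}(G)=0=\partial_n^{L}(K_n)$.

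The bulk of the argument is routine once the splitting $\mathcal{L}(G)=\mathcal{L}(K_n)+(S-R)$ is written down; the one genuinely substantive step is recognizing that the excess-distance matrix $S-R$ is a bona fide weighted Laplacian and therefore positive semidefinite, as this is exactly what converts the entrywise domination of $\mathcal{D}(G)$ over $\mathcal{D}(K_n)$ into a usable spectral comparison. The only point requiring care is the bookkeeping of the eigenvalue-ordering convention when invoking Weyl's inequality, to ensure the monotonicity is applied in the correct direction.
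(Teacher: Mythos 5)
Your proof is correct, but note a point of context: the paper does not prove this statement at all --- it is quoted verbatim as Corollary 3.6 of \cite{Aou}, so there is no internal proof to compare against, and your argument stands as a reconstruction of the cited result. As such it holds up completely. The computation $\mathcal{L}(K_n)=nI_n-J_n$ with spectrum $n$ (multiplicity $n-1$) and $0$ is right; the splitting $\mathcal{L}(G)=\mathcal{L}(K_n)+(S-R)$ with $r_{ij}=d(v_i,v_j)-1\ge 0$ is exact bookkeeping; the identity $x^{T}(S-R)x=\frac{1}{2}\sum_{i,j}r_{ij}(x_i-x_j)^{2}$ (valid since $R$ is symmetric with zero diagonal) certifies $S-R\succeq 0$; and Weyl's inequality in the form $\lambda_i(A+B)\ge\lambda_i(A)+\lambda_{\min}(B)$, with both spectra ordered decreasingly, then gives $\partial_i^{L}(G)\ge\partial_i^{L}(K_n)$ for every $i$. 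The case $i=n$ is correctly settled by $\mathcal{L}(G)\mathbf{1}=0$ together with nonnegativity of the spectrum; in fact your decomposition re-proves that nonnegativity, since $\mathcal{L}(G)$ is exhibited as a sum of two positive semidefinite matrices, so you do not even need the Ger\v{s}gorin remark the paper relies on. For comparison with the source: Aouchiche and Hansen derive this corollary from a more general monotonicity theorem --- if $\tilde{G}$ is a connected spanning subgraph of $G$, then $\partial_i^{L}(\tilde{G})\ge\partial_i^{L}(G)$ for all $i$, proved by observing that $\mathcal{L}(\tilde{G})-\mathcal{L}(G)$ is symmetric with nonpositive off-diagonal entries and zero row sums, hence a weighted Laplacian and positive semidefinite --- and then take $G=K_n$. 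Your argument is in effect the specialization of that comparison to the single pair $(G,K_n)$: it buys a shorter, fully explicit, one-shot proof of exactly this statement, while the subgraph route buys the general monotonicity result. The engine is identical in both, and it is the step you rightly single out: entrywise domination between distance matrices becomes a usable spectral comparison precisely because the excess is a weighted Laplacian.
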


A basic result, due to Aouchiche and Hansen, concerning a graph with only two distinct distance Laplacian eigenvalues is

\begin{theorem}\label{Han}
(\cite{Hansen}, Theorem 2.7) If $G$ is a graph on $n > 2$ vertices then the multiplicity of $\partial_1^{L}(G)$ is less or equal to $n-1$ with equality if and only if $G=K_n$.
\end{theorem}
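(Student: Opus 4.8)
The plan is to prove the inequality and the equality case by separate arguments, relying on Theorem~\ref{t21} for the bound and on the spectral structure of $\mathcal{L}(G)$ for the characterization of equality. First I would dispose of the upper bound on the multiplicity. Since $n>2$, Theorem~\ref{t21} gives $\partial_1^{L}(G)\geq n>0$, whereas $0$ is always an eigenvalue of $\mathcal{L}(G)$ with eigenvector $\mathbf{1}$. Hence $\partial_1^{L}(G)\neq 0$, so the eigenvalue $0$ is distinct from $\partial_1^{L}(G)$, and therefore the eigenspace of $\partial_1^{L}(G)$ is contained in a subspace of dimension at most $n-1$. Thus the multiplicity of $\partial_1^{L}(G)$ cannot exceed $n-1$.

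For the direction $G=K_n\Rightarrow$ multiplicity $=n-1$, I would compute directly. Since $K_n$ is $(n-1)$-transmission regular with $\mathcal{D}(K_n)=J_n-I_n$, where $J_n$ is the all ones matrix, we obtain $\mathcal{L}(K_n)=(n-1)I_n-(J_n-I_n)=nI_n-J_n$. Because $J_n$ has eigenvalue $n$ (simple) and eigenvalue $0$ with multiplicity $n-1$, the matrix $\mathcal{L}(K_n)$ has eigenvalue $0$ (simple) and eigenvalue $n$ with multiplicity $n-1$. Consequently $\partial_1^{L}(K_n)=n$ has multiplicity exactly $n-1$.

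The substantive direction, and the main obstacle, is to show that multiplicity $=n-1$ forces $G=K_n$. Suppose the multiplicity of $\mu:=\partial_1^{L}(G)$ equals $n-1$; then, together with the simple eigenvalue $0$, these are the only eigenvalues of $\mathcal{L}(G)$. Since $\mathcal{L}(G)$ is real symmetric it is orthogonally diagonalizable, the eigenspace of $0$ is $\mathrm{span}\{\mathbf{1}\}$, and the eigenspace of $\mu$ is exactly $\mathbf{1}^{\perp}$. Hence $\mathcal{L}(G)$ acts as $\mu$ times the orthogonal projection onto $\mathbf{1}^{\perp}$, which yields the rigid identity $\mathcal{L}(G)=\mu\bigl(I_n-\tfrac{1}{n}J_n\bigr)$.

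It then remains to read off the entries of this identity, which is where the structural conclusion emerges. The diagonal entries give $Tr(v_i)=\mu\tfrac{n-1}{n}$ for every $i$, so $G$ is transmission regular, while the off-diagonal entries give $d_{i,j}=-(\mathcal{L}(G))_{i,j}=\tfrac{\mu}{n}$ for all $i\neq j$; that is, every pairwise distance equals the common value $\tfrac{\mu}{n}$. Since distances are positive integers and $G$ is connected on $n\geq 2$ vertices, at least one adjacent pair has distance $1$, which forces $\tfrac{\mu}{n}=1$ and hence $d_{i,j}=1$ for all $i\neq j$. Therefore every pair of vertices is adjacent and $G=K_n$, completing the argument. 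I expect the only delicate point to be the passage from "two distinct eigenvalues with $0$ fixed on $\mathbf{1}$" to the explicit form $\mu\bigl(I_n-\tfrac{1}{n}J_n\bigr)$; once that is in hand, the remaining steps are direct entrywise readings together with the integrality and connectedness constraints.
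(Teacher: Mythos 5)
Your proof is correct and complete, but note first that the paper does not actually prove this statement: Theorem~\ref{Han} is imported from the literature (\cite{Hansen}, Theorem 2.7) and used as a black box (e.g.\ in the equality analysis of Theorem~\ref{Tr}), so there is no internal proof to compare against. Your argument is the standard one and matches the spirit of the cited source: since $G$ is connected, $0$ is a simple eigenvalue with eigenvector $\mathbf{1}$, and $\partial_1^{L}(G)>0$, so the multiplicity is at most $n-1$; equality forces the spectrum $\{0,\mu^{(n-1)}\}$ with $\mu=\partial_1^{L}(G)$, and the spectral theorem then gives the rank-one identity $\mathcal{L}(G)=\mu\bigl(I_n-\tfrac{1}{n}J_n\bigr)$, whose off-diagonal entries say every distance equals $\tfrac{\mu}{n}$; integrality of distances plus the existence of at least one edge forces $\tfrac{\mu}{n}=1$, i.e.\ $G=K_n$, and your direct computation $\mathcal{L}(K_n)=nI_n-J_n$ settles the converse. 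Two small observations: your appeal to Theorem~\ref{t21} for $\partial_1^{L}(G)>0$ is fine but not needed, since $\operatorname{trace}\mathcal{L}(G)=2W(G)>0$ already does it; and your equality argument in fact proves the slightly stronger fact that a connected graph with exactly two distinct distance Laplacian eigenvalues is complete, which is precisely the distance Laplacian analogue of Lemma~\ref{Aouc2} that the paper quotes for the signless case $\mathcal{Q}(G)$ in Theorem~\ref{T14} -- so your proof would let the paper treat both equality characterizations uniformly. One pedantic caveat: the statement says ``graph,'' but the distance Laplacian presupposes connectivity (a standing assumption in the paper), and your use of the simplicity of the eigenvalue $0$ quietly relies on it, which is worth flagging explicitly.
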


Some upper bounds on $\partial_1^{L}(G)$ are already known. Among them, we have
\begin{theorem}
(\cite{You}, Theorem 3.7) Let $G=(V,E)$ be a connected graph on n vertices. Then
\begin{align}\label{i1}
\partial^{L}_{1}(G) \ \leq \ \max\limits_{1\leq i \leq n}\bigg\{Tr(v_{i})+\sqrt{(n-1)\sum\limits_{k=1}^{n}d^{2}_{k,i}}\bigg\} \ .
\end{align}
Moreover, if the equality in (\ref{i1}) holds, then \ $Tr(v_{i})+\sqrt{(n-1)\sum\limits_{k=1}^{n}d^{2}_{k,i}} \ = \ Tr(v_{j})+\sqrt{(n-1)\sum\limits_{k=1}^{n}d^{2}_{k,j}}$ \ for any $i,j\in \{1,\ldots,n\}$.
\end{theorem}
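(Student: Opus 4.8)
The plan is to read off the stated quantity from the eigenvalue equation at the coordinate where a Perron-type eigenvector is largest in modulus, and then extract the radical by a single application of Cauchy--Schwarz. Since $\mathcal{L}(G)$ is real symmetric and $\partial_1^{L}(G)$ is its largest eigenvalue, I would begin with a unit eigenvector $x=(x_1,\ldots,x_n)^{T}$ associated with $\partial_1^{L}(G)$ and fix an index $i$ with $|x_i|=\max_{1\le k\le n}|x_k|$. Replacing $x$ by $-x$ if necessary, I may assume $x_i>0$, so that $x_i\ge |x_j|$ for every $j$.

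Next I would write the $i$-th row of $\mathcal{L}(G)x=\partial_1^{L}(G)x$. Using $(\mathcal{L}(G))_{ii}=Tr(v_i)$ and $(\mathcal{L}(G))_{ij}=-d_{i,j}$ for $j\ne i$ (with $d_{i,i}=0$), this reads
\[
\big(\partial_1^{L}(G)-Tr(v_i)\big)\,x_i=-\sum_{j\ne i}d_{i,j}\,x_j .
\]
Taking absolute values and applying the triangle inequality followed by Cauchy--Schwarz to the vectors $(d_{i,j})_{j\ne i}$ and $(x_j)_{j\ne i}$ gives
\[
\big|\partial_1^{L}(G)-Tr(v_i)\big|\,x_i\le \sqrt{\sum_{j\ne i}d_{i,j}^{2}}\;\sqrt{\sum_{j\ne i}x_j^{2}} .
\]
I would then bound $\sum_{j\ne i}x_j^{2}\le (n-1)x_i^{2}$ using $|x_j|\le x_i$, divide through by $x_i>0$, and recall $\sum_{j\ne i}d_{i,j}^{2}=\sum_{k=1}^{n}d_{k,i}^{2}$ to obtain $\partial_1^{L}(G)\le Tr(v_i)+\sqrt{(n-1)\sum_{k=1}^{n}d_{k,i}^{2}}$, which is at most the maximum over all indices. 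This establishes (\ref{i1}); note that no use of the orthogonality of $x$ to $\mathbf{1}$ is needed.

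For the equality statement I would trace the inequalities backwards. If equality holds in (\ref{i1}), then the whole chain collapses to equalities at the chosen index $i$; in particular equality in $\sum_{j\ne i}x_j^{2}\le (n-1)x_i^{2}$ forces $|x_j|=x_i$ for all $j$, so every coordinate of $x$ has the same modulus. Consequently every index is a maximum-modulus index, and re-running the one-coordinate estimate of the previous paragraph at an arbitrary $\ell$ (taking absolute values, so the sign of $x_\ell$ is irrelevant) yields $\partial_1^{L}(G)\le Tr(v_\ell)+\sqrt{(n-1)\sum_{k=1}^{n}d_{k,\ell}^{2}}$ for every $\ell$. Since each of these quantities is at most the attained maximum $\partial_1^{L}(G)$, each one must in fact equal $\partial_1^{L}(G)$, which gives the claimed common value $Tr(v_i)+\sqrt{(n-1)\sum_{k}d_{k,i}^{2}}=Tr(v_j)+\sqrt{(n-1)\sum_{k}d_{k,j}^{2}}$ for all $i,j$.

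The two inequalities used are entirely routine; the only delicate point is the equality analysis, where one has to observe that tightness forces all entries of the eigenvector to be equal in modulus, so that the single-coordinate estimate can be legitimately re-run at \emph{every} vertex. I expect that propagation step to be the main thing to get right.
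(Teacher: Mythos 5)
Your proposal is correct, but there is nothing in the paper to compare it against directly: the paper states this theorem as an imported result, citing Theorem~3.7 of the reference by You, Shu and Zhang, and gives no proof (the cited source derives it as an application of a general sharp upper bound for the spectral radius of a \emph{nonnegative} matrix; note that elsewhere in this paper, e.g.\ Theorem~\ref{cota1}, the authors must first pass from $\mathcal{L}(G)$ to a nonnegative matrix via Brauer's theorem precisely because $\mathcal{L}(G)$ has negative off-diagonal entries). Your argument sidesteps all of that nonnegativity machinery: you work directly with the symmetric matrix $\mathcal{L}(G)$, read the eigenvalue equation at a maximal-modulus coordinate, and apply Cauchy--Schwarz, which is more elementary and also makes the equality analysis transparent --- the collapse of the chain forces $|x_j|=x_i$ for all $j$, after which re-running the estimate at every index $\ell$ and sandwiching each quantity between $\partial_1^{L}(G)$ and the attained maximum gives the stated necessary condition. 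Two small points deserve an explicit sentence: first, to conclude $\sum_{j\neq i}x_j^{2}=(n-1)x_i^{2}$ from equality in the product $\sqrt{\sum_{j\neq i}d_{i,j}^{2}}\,\sqrt{\sum_{j\neq i}x_j^{2}}=\sqrt{\sum_{j\neq i}d_{i,j}^{2}}\,\sqrt{n-1}\,x_i$ you must observe that the common factor $\sqrt{\sum_{j\neq i}d_{i,j}^{2}}$ is positive, which holds because $G$ is connected with $n\geq 2$ (so $d_{i,j}\geq 1$ for $j\neq i$; the case $n=1$ is trivial); second, the division by $x_\ell$ in the re-run step needs $x_\ell\neq 0$, which indeed follows from $|x_\ell|=x_i>0$, but is worth saying. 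With those two remarks added, your proof is complete and, if anything, cleaner than deriving the bound from the general nonnegative-matrix theorem of the cited source.
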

\begin{theorem}
(\cite{Das}, Theorem 4.3) Let $G$ be a connected graph on $n\geq 4$ vertices. Then
\begin{equation}\label{d1}
  \partial_1^{L}(G) \leq 2 W(G)-n(n-2).
\end{equation}
\end{theorem}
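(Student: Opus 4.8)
The plan is to derive the bound purely from the trace identity together with the uniform lower bound on the distance Laplacian eigenvalues recorded in Theorem \ref{t21}, so no new matrix analysis is needed.

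First I would recall that $\mathrm{trace}(\mathcal{L}(G)) = 2W(G)$, which says that the sum of all $n$ distance Laplacian eigenvalues equals $2W(G)$. Since $G$ is connected, $0$ is a simple eigenvalue of $\mathcal{L}(G)$, so $\partial_n^{L}(G)=0$ and therefore
\begin{equation*}
\sum_{i=1}^{n-1}\partial_i^{L}(G) = 2W(G).
\end{equation*}
I would then isolate the spectral radius by writing
\begin{equation*}
\partial_1^{L}(G) = 2W(G) - \sum_{i=2}^{n-1}\partial_i^{L}(G).
\end{equation*}

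The key step is to control the sum on the right from below. By Theorem \ref{t21}, for every index $i$ with $1\leq i\leq n-1$ we have $\partial_i^{L}(G)\geq n$. In particular this holds for the $n-2$ indices $i=2,3,\ldots,n-1$, so that $\sum_{i=2}^{n-1}\partial_i^{L}(G)\geq (n-2)\,n$. Substituting this into the displayed identity yields
\begin{equation*}
\partial_1^{L}(G) \leq 2W(G) - n(n-2),
\end{equation*}
which is exactly (\ref{d1}).

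The argument is essentially a counting argument, so the only real obstacle is bookkeeping: one must verify that there are exactly $n-2$ eigenvalues sitting strictly between $\partial_1^{L}(G)$ and the simple zero eigenvalue, and that each of them is bounded below by $n$. The hypothesis $n\geq 4$ guarantees that this middle block is nonempty (indeed contains at least two eigenvalues), so the estimate is non-vacuous and the sharpest available lower bound from Theorem \ref{t21} can be applied to every term of the sum.
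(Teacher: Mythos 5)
Your proof is correct: the paper itself states this result without proof (it is quoted from \cite{Das}), and your argument --- combining the trace identity $\sum_{i=1}^{n-1}\partial_i^{L}(G)=2W(G)$ (with $\partial_n^{L}(G)=0$ simple by connectedness) with the lower bound $\partial_i^{L}(G)\geq n$ of Theorem \ref{t21} applied to the $n-2$ middle eigenvalues --- is exactly the standard derivation used in the original source. One minor remark: your argument does not actually require $n\geq 4$, since Theorem \ref{t21} holds for $n\geq 3$ and the middle block is nonempty already then; the stronger hypothesis in \cite{Das} pertains to the equality characterization given there, not to the inequality itself.
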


\begin{theorem} (\cite{Das}, Theorem 4.4)
Let $G$ be a connected graph on $n$ vertices. Then
\begin{equation}\label{d2}
  \partial_1^{L}(G) < \max_{1\leq i \leq n}\emph{Tr}(v_i) + \sqrt{\|\mathcal{D}(G)\|_{F}^{2}
  - \frac{\sum_{i=1}^{n}\big(\emph{Tr}(v_i)\big)^{2}}{n}}.
\end{equation}
\end{theorem}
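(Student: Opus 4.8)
The plan is to evaluate $\partial_1^{L}(G)$ on its own eigenvector and to exploit that this eigenvector is orthogonal to $\mathbf{1}$. Write $\mathcal{L}(G)=\mathrm{Tr}(G)-\mathcal{D}(G)$, and let $x$ be a unit eigenvector for $\partial_1^{L}(G)$. Because $G$ is connected, $0$ is a simple eigenvalue of $\mathcal{L}(G)$ with eigenvector $\mathbf{1}$; since $\partial_1^{L}(G)>0$ and $\mathcal{L}(G)$ is symmetric, eigenvectors for distinct eigenvalues are orthogonal, so $\mathbf{1}^{T}x=0$. The Rayleigh quotient then gives $\partial_1^{L}(G)=x^{T}\mathrm{Tr}(G)x-x^{T}\mathcal{D}(G)x=\sum_{i=1}^{n}Tr(v_i)x_i^{2}-x^{T}\mathcal{D}(G)x$.

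I would bound the two summands separately. Since $\|x\|=1$, the first obeys $\sum_i Tr(v_i)x_i^{2}\le\max_{1\le i\le n}Tr(v_i)$. For the second, the crucial point is that $x\perp\mathbf{1}$ lets me replace $\mathcal{D}(G)$ by its compression onto $\mathbf{1}^{\perp}$: with $J=\mathbf{1}\mathbf{1}^{T}$ and $P=I_n-\tfrac1n J$ one has $Px=x$, hence $-x^{T}\mathcal{D}(G)x=x^{T}\big(-P\mathcal{D}(G)P\big)x\le\mu$, where $\mu:=\max\{-y^{T}\mathcal{D}(G)y:\ \|y\|=1,\ y\perp\mathbf{1}\}$ is the largest eigenvalue of $-P\mathcal{D}(G)P$ on $\mathbf{1}^{\perp}$. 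This restriction to $\mathbf{1}^{\perp}$ is what will produce the factor $\tfrac1n$; bounding over all unit vectors would only yield $-\partial_n(G)$.

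Next I would bound $\mu$ by a Frobenius norm. As $P\mathbf{1}=0$, the symmetric matrix $P\mathcal{D}(G)P$ has $\mathbf{1}$ in its kernel, so its remaining eigenvalues are exactly those on $\mathbf{1}^{\perp}$ and therefore $\mu^{2}\le\|P\mathcal{D}(G)P\|_{F}^{2}$. Writing $t=\mathcal{D}(G)\mathbf{1}=(Tr(v_1),\dots,Tr(v_n))^{T}$, so that $t^{T}t=\sum_i Tr(v_i)^{2}$ and $\mathbf{1}^{T}t=\sum_i Tr(v_i)$, a short computation with $\|P\mathcal{D}(G)P\|_{F}^{2}=\mathrm{trace}\big((\mathcal{D}(G)P)^{2}\big)$ and $\mathcal{D}(G)P=\mathcal{D}(G)-\tfrac1n t\mathbf{1}^{T}$ yields
\[
\|P\mathcal{D}(G)P\|_{F}^{2}=\|\mathcal{D}(G)\|_{F}^{2}-\frac{2}{n}\sum_{i=1}^{n}Tr(v_i)^{2}+\frac{1}{n^{2}}\Big(\sum_{i=1}^{n}Tr(v_i)\Big)^{2}.
\]
By Cauchy--Schwarz $\big(\sum_i Tr(v_i)\big)^{2}\le n\sum_i Tr(v_i)^{2}$, so this is at most $\|\mathcal{D}(G)\|_{F}^{2}-\tfrac1n\sum_i Tr(v_i)^{2}$. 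Combining the two bounds gives $\partial_1^{L}(G)\le\max_i Tr(v_i)+\sqrt{\|\mathcal{D}(G)\|_{F}^{2}-\tfrac1n\sum_i Tr(v_i)^{2}}$, which is the asserted estimate with $\le$ in place of $<$.

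The step I expect to be the main obstacle is upgrading $\le$ to the strict inequality. Equality throughout would require simultaneously that Cauchy--Schwarz be tight, i.e. $G$ is transmission regular with common transmission $k$, and that $\mu^{2}=\|P\mathcal{D}(G)P\|_{F}^{2}$, i.e. $-P\mathcal{D}(G)P$ has a single nonzero eigenvalue on $\mathbf{1}^{\perp}$. For a $k$-transmission regular graph $\mathbf{1}$ is the Perron eigenvector of $\mathcal{D}(G)$ with eigenvalue $k$, so the last condition forces the distance spectrum to be $\{k,0^{(n-2)},-k\}$ (the value $-k$ coming from $\mathrm{trace}\,\mathcal{D}(G)=0$); that is, $\mathcal{D}(G)$ has rank $2$. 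Decomposing such a matrix as $k\,uu^{T}-k\,vv^{T}$ with $u=\mathbf{1}/\sqrt{n}$ and $v\perp\mathbf{1}$, $|v_i|=1/\sqrt{n}$, forces $d_{ij}=0$ for some pair of distinct vertices of equal sign, which is impossible since $d_{ij}\ge 1$. Hence no connected graph on $n\ge 3$ vertices attains equality, and the inequality is strict. I would present this strictness analysis as the delicate part, the rest being routine.
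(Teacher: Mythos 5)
Your proof is correct, and I verified its two computational hinges: the trace identity $\lVert P\mathcal{D}(G)P\rVert_{F}^{2}=\operatorname{trace}\bigl((\mathcal{D}(G)P)^{2}\bigr)=\lVert\mathcal{D}(G)\rVert_{F}^{2}-\tfrac{2}{n}\sum_{i}Tr(v_i)^{2}+\tfrac{1}{n^{2}}\bigl(\sum_{i}Tr(v_i)\bigr)^{2}$ (using $P^{2}=P$, cyclicity, and $\mathbf{1}^{T}\mathcal{D}(G)t=t^{T}t$) and the rank-two spectral decomposition in the equality analysis both check out. Note that the paper itself contains no proof of this statement; it is quoted from Theorem 4.4 of \cite{Das}, so there is no internal argument to compare with. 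Measured against the standard derivation of bounds of this shape — split $\mathcal{L}(G)=Tr(G)-\mathcal{D}(G)$, apply Weyl's inequality to get $\partial_1^{L}(G)\leq\max_{i}Tr(v_i)-\partial_n(G)$, then bound $-\partial_n(G)\leq\sqrt{\lVert\mathcal{D}(G)\rVert_{F}^{2}-\partial_1(G)^{2}}$ together with $\partial_1(G)^{2}\geq\tfrac{1}{n}\mathbf{1}^{T}\mathcal{D}(G)^{2}\mathbf{1}=\tfrac{1}{n}\sum_{i}Tr(v_i)^{2}$, extracting strictness from the equality case of Weyl — your route is genuinely different: you never invoke Weyl or the Perron vector of $\mathcal{D}(G)$, but instead compress $\mathcal{D}(G)$ onto $\mathbf{1}^{\perp}$, which is exactly where the $\tfrac{1}{n}\sum_i Tr(v_i)^{2}$ correction comes from (as you observe, the crude bound $-x^{T}\mathcal{D}(G)x\leq-\partial_n(G)$ alone would not produce it without the extra Rayleigh estimate on $\partial_1(G)$). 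What your approach buys is a self-contained argument with an unusually concrete equality analysis: the Cauchy--Schwarz case forces transmission regularity, the Frobenius case forces distance spectrum $\{k,0^{(n-2)},-k\}$, and the rank-two decomposition forces $d_{i,j}=0$ for a pair of equal-sign entries of $v$. One small hygiene point: passing from $\mu^{2}\leq\lVert P\mathcal{D}(G)P\rVert_{F}^{2}$ to $\mu\leq\lVert P\mathcal{D}(G)P\rVert_{F}$ uses $\mu\geq 0$, which deserves a line (it is immediate, since $\operatorname{trace}(-P\mathcal{D}(G)P)=2W(G)/n>0$, or trivially because the right-hand side is nonnegative).

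A substantive byproduct of your strictness argument is worth making explicit: the pigeonhole on the signs of $v$ needs $n\geq 3$, and that restriction is real. For $K_2$ one has $\partial_1^{L}(K_2)=2$ while the right-hand side of \eqref{d2} is $1+\sqrt{2-1}=2$, so the strict inequality as transcribed fails at $n=2$. Your proof therefore establishes the bound with $\leq$ for all $n$ and with $<$ exactly where it is true, namely $n\geq 3$, implicitly flagging a hypothesis missing from the quoted statement.
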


In the following propositions, we derive some new upper bounds on $\partial_1^{L}(G)$. We begin recalling a useful result due to Brauer \cite{brauer} :

\begin{theorem}
\label{BRAUER}Let $A$ be an $n\times n$ arbitrary matrix with eigenvalues
\begin{equation*}
\lambda _{1},\ldots,\lambda_k,\dots,\lambda _{n}.
\end{equation*}
Let
\begin{equation*}
\mathbf{x}=\left[ x_{1},\ldots,x_{n}\right] ^{T}
\end{equation*}%
be an eigenvector of $A$ corresponding to the eigenvalue $\lambda _{k}$ and
let $\mathbf{q}$ be any $n$-dimensional column vector. Then the matrix $A+%
\mathbf{xq}^{T}$ has eigenvalues
\begin{equation*}
\lambda _{1},\lambda _{2},\ldots,\lambda _{k-1},\lambda _{k}+\mathbf{x}^{T}%
\mathbf{q},\lambda _{k+1},\ldots,\lambda _{n}.
\end{equation*}
\end{theorem}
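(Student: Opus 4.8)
The plan is to compute the characteristic polynomial of the rank-one perturbation $A+\mathbf{x}\mathbf{q}^{T}$ in closed form and simply read off its roots. The one external ingredient I would use is the matrix determinant lemma (a special case of Sylvester's determinant theorem): for an invertible $n\times n$ matrix $M$ and column vectors $\mathbf{u},\mathbf{v}$,
\begin{equation*}
\det\!\left(M+\mathbf{u}\mathbf{v}^{T}\right)=\det(M)\left(1+\mathbf{v}^{T}M^{-1}\mathbf{u}\right).
\end{equation*}
I would take this as standard; it drops out of evaluating $\det\begin{bmatrix} M & \mathbf{u}\\ -\mathbf{v}^{T} & 1\end{bmatrix}$ by Schur complements with respect to the two diagonal blocks.

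First I would fix a scalar $\lambda$ that is not an eigenvalue of $A$, so that $M:=\lambda I-A$ is invertible, and apply the lemma with $\mathbf{u}=\mathbf{x}$ and $\mathbf{v}=-\mathbf{q}$:
\begin{equation*}
\det\!\left(\lambda I-A-\mathbf{x}\mathbf{q}^{T}\right)=\det(\lambda I-A)\left(1-\mathbf{q}^{T}(\lambda I-A)^{-1}\mathbf{x}\right).
\end{equation*}
The eigenvector hypothesis now does the work. From $A\mathbf{x}=\lambda_{k}\mathbf{x}$ we get $(\lambda I-A)\mathbf{x}=(\lambda-\lambda_{k})\mathbf{x}$, hence $(\lambda I-A)^{-1}\mathbf{x}=(\lambda-\lambda_{k})^{-1}\mathbf{x}$, and since $\mathbf{q}^{T}\mathbf{x}=\mathbf{x}^{T}\mathbf{q}$ this yields
\begin{equation*}
\det\!\left(\lambda I-A-\mathbf{x}\mathbf{q}^{T}\right)=\det(\lambda I-A)\left(1-\frac{\mathbf{x}^{T}\mathbf{q}}{\lambda-\lambda_{k}}\right).
\end{equation*}
Writing $\det(\lambda I-A)=\prod_{i=1}^{n}(\lambda-\lambda_{i})$ and cancelling the factor $(\lambda-\lambda_{k})$ against the denominator gives
\begin{equation*}
\det\!\left(\lambda I-A-\mathbf{x}\mathbf{q}^{T}\right)=\left(\lambda-\lambda_{k}-\mathbf{x}^{T}\mathbf{q}\right)\prod_{i\neq k}(\lambda-\lambda_{i}).
\end{equation*}

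To finish, I would observe that both sides are polynomials in $\lambda$ of degree $n$ that agree for all $\lambda$ outside the finite spectrum of $A$; two polynomials agreeing at infinitely many points are identical, so the factorization holds for every $\lambda$. The right-hand side is then the characteristic polynomial of $A+\mathbf{x}\mathbf{q}^{T}$ in factored form, whose roots, counted with multiplicity, are the $\lambda_{i}$ for $i\neq k$ together with $\lambda_{k}+\mathbf{x}^{T}\mathbf{q}$, exactly as claimed. The only real obstacle is the bookkeeping at the two places where I divided by $(\lambda-\lambda_{k})$: the computation is a priori valid only for $\lambda$ avoiding the eigenvalues of $A$, and it is the polynomial-identity principle that legitimately extends the conclusion to $\lambda=\lambda_{k}$ and to the remaining eigenvalues. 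An alternative route that sidesteps this point is Schur triangularization: pick a unitary $U$ whose first column is $\mathbf{x}/\|\mathbf{x}\|$ with $U^{*}AU=T$ upper triangular and $\lambda_{k}$ in the $(1,1)$ entry; then $U^{*}(A+\mathbf{x}\mathbf{q}^{T})U=T+\|\mathbf{x}\|\,e_{1}(\mathbf{q}^{T}U)$ is still upper triangular, since the rank-one term only modifies the first row, and its diagonal reads $\lambda_{k}+\mathbf{x}^{T}\mathbf{q}$ followed by the other $\lambda_{i}$. There the delicate step is instead justifying that the Schur form can be arranged with the prescribed eigenvalue $\lambda_{k}$ first and with $\mathbf{x}$ as the leading column.
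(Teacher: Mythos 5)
Your proof is correct, but there is nothing in the paper to compare it against: Theorem \ref{BRAUER} is stated there without proof, as a classical result quoted from Brauer's 1952 paper \cite{brauer} and used as a black box. Your main route is a complete, self-contained argument: the matrix determinant lemma gives $\det\!\left(\lambda I-A-\mathbf{x}\mathbf{q}^{T}\right)=\det(\lambda I-A)\bigl(1-\mathbf{q}^{T}(\lambda I-A)^{-1}\mathbf{x}\bigr)$ for $\lambda$ off the spectrum, the eigenvector hypothesis collapses the resolvent term to $(\lambda-\lambda_{k})^{-1}\mathbf{x}^{T}\mathbf{q}$, and after cancelling the factor $(\lambda-\lambda_{k})$ you land on $\left(\lambda-\lambda_{k}-\mathbf{x}^{T}\mathbf{q}\right)\prod_{i\neq k}(\lambda-\lambda_{i})$. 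The two delicate points you flag are exactly the right ones, and you resolve both: the divisions by $\lambda-\lambda_{k}$ are legitimate on the cofinite set of $\lambda$ where you work, and since both sides are monic polynomials of degree $n$ agreeing at infinitely many points, the polynomial-identity principle extends the factorization to all $\lambda$; multiplicities are then read off correctly, including the case where $\lambda_{k}+\mathbf{x}^{T}\mathbf{q}$ happens to collide with another $\lambda_{i}$. Your alternative via Schur triangularization is also sound: the standard inductive construction of the Schur form begins with an arbitrary unit eigenvector, so taking $\mathbf{x}/\|\mathbf{x}\|$ as the first column of $U$ places $\lambda_{k}$ in position $(1,1)$, and the remaining diagonal entries must be the other $\lambda_{i}$ with multiplicity because the diagonal of any Schur form is the spectrum of $A$; since $U^{*}\mathbf{x}=\|\mathbf{x}\|\,\mathbf{e}_{1}$, the rank-one term only alters the first row, shifting the $(1,1)$ entry by $\mathbf{q}^{T}\mathbf{x}=\mathbf{x}^{T}\mathbf{q}$ (transpose, not conjugate transpose, so this holds over $\mathbb{C}$ as well). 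Either argument would serve as a valid replacement for the paper's citation; the determinant-lemma route is the one usually given for this theorem, while the Schur route buys a proof with no division and no appeal to polynomial identities.
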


We next derive our first new upper bound on $\partial_1^{L}(G)$.

\begin{theorem}
\label{cota1}
Let $G$ be a connected graph of order $n$. For $i=1,\ldots,n$, let
\begin{equation*}
p_{i}=\max_{j\neq i}d_{i,j}
\end{equation*}
and let
\begin{equation}\label{B}
\mathcal{B}(G) = \mathcal{L}(G)+\mathbf{1}\mathbf{p}^{T}.
\end{equation}%
Then
\begin{equation}\label{n1}
\partial _{1}^{L}(G) \leq \sum_{i=1}^{n}p_i.
\end{equation}
If the equality in (\ref{n1}) holds then $\mathcal{B}(G)$ is a reducible matrix. This necessary condition for the equality in (\ref{n1}), it is not a sufficient condition. If $\mathcal{B}(G)$ is a irreducible matrix the inequality in (\ref{n1}) is strict.
\end{theorem}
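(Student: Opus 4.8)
The plan is to realize $\mathcal{B}(G)$ as a rank-one perturbation of $\mathcal{L}(G)$, use Brauer's theorem (Theorem \ref{BRAUER}) to pin down its spectrum, and then combine this with the Perron--Frobenius theory of Theorem \ref{PF}. First I would recall that every row sum of $\mathcal{L}(G)$ vanishes, so $(0,\mathbf{1})$ is an eigenpair of $\mathcal{L}(G)$; since the eigenvalues of $\mathcal{L}(G)$ are nonnegative and $0$ is simple, the remaining ones are exactly $\partial_1^{L}(G)\geq\cdots\geq\partial_{n-1}^{L}(G)>0$. Applying Theorem \ref{BRAUER} with $A=\mathcal{L}(G)$, eigenvector $\mathbf{x}=\mathbf{1}$ for the eigenvalue $\lambda_k=0$, and $\mathbf{q}=\mathbf{p}$, the eigenvalue $0$ is replaced by $0+\mathbf{1}^{T}\mathbf{p}=\sum_{i=1}^{n}p_i$ while all other eigenvalues are unchanged. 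Hence the spectrum of $\mathcal{B}(G)$ is precisely $\{\partial_1^{L}(G),\ldots,\partial_{n-1}^{L}(G),\sum_i p_i\}$.

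Next I would verify that $\mathcal{B}(G)$ is a nonnegative matrix. Reading off entries, the diagonal equals $Tr(v_i)+p_i\geq 0$, and for $i\neq j$ the entry is $p_j-d_{i,j}$, which is nonnegative because $p_j=\max_{k\neq j}d_{k,j}\geq d_{i,j}$. Moreover, since the rows of $\mathcal{L}(G)$ sum to $0$ and each row of $\mathbf{1}\mathbf{p}^{T}$ sums to $\sum_i p_i$, the matrix $\mathcal{B}(G)$ has constant row sum $\sum_i p_i$. A nonnegative matrix with constant row sum $r$ has spectral radius exactly $r$: the value is attained on $\mathbf{1}$ and bounds $\rho$ from above by the standard maximum-row-sum estimate. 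Thus $\rho(\mathcal{B}(G))=\sum_i p_i$. Because $\partial_1^{L}(G)$ is one of the eigenvalues of $\mathcal{B}(G)$ and is nonnegative, $\partial_1^{L}(G)\leq\rho(\mathcal{B}(G))=\sum_i p_i$, which is (\ref{n1}).

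For the equality discussion I would argue via simplicity of the Perron root. If equality holds, then $\sum_i p_i=\partial_1^{L}(G)$, so the value $\sum_i p_i=\rho(\mathcal{B}(G))$ occurs at least twice in the multiset of eigenvalues of $\mathcal{B}(G)$, namely as the Brauer-modified eigenvalue and as $\partial_1^{L}(G)$. By Theorem \ref{PF}, an irreducible nonnegative matrix has a simple spectral radius; therefore $\mathcal{B}(G)$ cannot be irreducible, i.e.\ it is reducible. The contrapositive immediately gives strictness of (\ref{n1}) whenever $\mathcal{B}(G)$ is irreducible. Finally, to show reducibility is not sufficient for equality, I would exhibit an explicit small graph for which $\mathcal{B}(G)$ is reducible yet $\partial_1^{L}(G)<\sum_i p_i$; this is a direct numerical check rather than a structural argument.

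The inequality itself falls out cleanly from Brauer plus the constant-row-sum property, so the main obstacle is making the multiplicity bookkeeping in the equality case airtight: one must ensure that Theorem \ref{BRAUER} delivers the full multiset of eigenvalues, so that the coincidence $\partial_1^{L}(G)=\sum_i p_i$ genuinely forces a repeated Perron root, and then locate a concrete reducible example in which the inequality is strict to confirm non-sufficiency.
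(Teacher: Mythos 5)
Your proposal is correct and follows essentially the same route as the paper: Brauer's theorem applied to the eigenpair $(0,\mathbf{1})$ of $\mathcal{L}(G)$ to identify the spectrum of $\mathcal{B}(G)$, nonnegativity of $\mathcal{B}(G)$ together with Perron--Frobenius to get $\rho(\mathcal{B}(G))=\sum_{i=1}^{n}p_i$, the bound $\partial_1^{L}(G)\leq\rho(\mathcal{B}(G))$, and simplicity of the Perron root of an irreducible matrix for the equality/strictness discussion, with a concrete reducible example (the paper's Example \ref{ex1}) for non-sufficiency. Your justification of $\rho(\mathcal{B}(G))=\sum_i p_i$ via the constant row sum and the maximum-row-sum estimate is a slightly more explicit version of the paper's appeal to Theorem \ref{PF} with the positive eigenvector $\mathbf{1}$, but it is the same argument in substance.
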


\begin{proof} \ \ Since $(0,\mathbf{1}) $ is an eigenpair
for the distance Laplacian matrix $\mathcal{L}(G),$ using Theorem \ref%
{BRAUER}, we obtain that the eigenvalues of $\mathcal{B}(G)$
are
\begin{equation*}
\partial_1^{L}(G),\ldots,\partial_{n-1}^{L}(G), \mathbf{1}^{T}\mathbf{p}.
\end{equation*}%
Moreover
\begin{equation*}
\mathcal{B}(G) \mathbf{1}= \mathcal{L}(G) \mathbf{1}+\mathbf{1}(\mathbf{p}^{T}\mathbf{1})=(\mathbf{p}^{T}\mathbf{1})\mathbf{1}= (\mathbf{1}^{T}\mathbf{p})\mathbf{1}.
\end{equation*}
The entries of $\mathcal{B}(G)=(b_{i,j})$ are%
\begin{equation*}
b_{i,i} = Tr(v_i) + p_i
\end{equation*}
for $i=1,\ldots,n$
and
\begin{equation*}
  b_{i,j}=-d_{i,j}+p_j
\end{equation*}
for $j \neq i$.
Then $\mathcal{B}(G)$ is a nonnegative matrix.
From Theorem \ref{PF}, we obtain that $\rho (\mathcal{B}(G))=
\mathbf{1}^{T}\mathbf{p}=\sum_{i=1}^{n}p_i$ with eigenvector the all
ones vector $\mathbf{1}.$
Therefore $\partial_1^{L}(G) \leq \sum_{i=1}^{n}p_i.$

Suppose that $\partial_1^{L}(G) = \sum_{i=1}^{n}p_i.$ Hence $\rho (\mathcal{B}(G))$ is a repeated eigenvalue of the nonnegative matrix $\mathcal{B}(G)$ and then, from Theorem \ref{PF}, it is a reducible matrix. The converse is not true (see Example \ref{ex1} below).

Suppose that $\mathcal{B}(G)$ is a irreducible matrix. From Theorem \ref{PF}, $\rho(\mathcal{B}(G))$ is a simple eigenvalue and then $\partial_1^{L}(G)$ <  $\rho(\mathcal{B}(G))=\sum_{i=1}^{n}p_i$.
\end{proof}

\begin{example}
\label{ex1}
Let $G:$

\begin{center}
\begin{tikzpicture}
 \tikzstyle{every node}=[draw,circle,fill=black,minimum size=4pt,
                            inner sep=0pt]
                               % 0 grados y se continua con 180+(360/n)
    \draw (0,2) node (1) [label=above:$1$] {}
          (-3,0) node (2) [label=below:$2$] {}
         (-1,0) node (3) [label=below:$3$] {}
          (1,0) node (4) [label=below:$4$] {}
         (3,0) node (5) [label=below:$5$] {};
        \draw (1)--(2);  \draw (1)--(3); \draw (1)--(4);  \draw (1)--(5);        \draw (2)--(3); \draw (3)--(4);         \draw (4)--(5);
\end{tikzpicture}
\end{center}

To four decimal places, the distance Laplacian eigenvalues of $G$ are $0$, $5$, $5.5858$, $7$ and $8.4142$. We have $\textbf{p}=[1,2,2,2,2]^{T}$ and then $\partial_1^{L}(G) < \sum_{i=1}^{5}p_i=9$. Since $p_1=1$, the first column of $\mathcal{B}(G)$ is $[5,0,0,0,0]^{T}$ which shows that $\mathcal{B}(G)$ is reducible.
\end{example}

There are also some results on upper bounds for the second largest modulus $\xi \left( B\right) $ of the eigenvalues of a nonnegative matrix $B.$ We recall
the result \cite{second} :
\begin{theorem}
\label{BAUER}If $B=\left( b_{i,j}\right) \geq 0$ of order $n\times n$ has a
positive eigenvector
\begin{equation*}
\mathbf{x}=\left[ x_{1},\ldots,x_{n}\right] ^{T}
\end{equation*}%
corresponding to $\rho \left( B\right) $ then%
\begin{equation*}
\xi \left( B\right) \leq \frac{1}{2}\max_{1\leq i<j\leq
n}\sum_{k=1}^{n}x_{k}\left\vert \frac{b_{i,k}}{x_{i}}-\frac{b_{j,k}}{x_{j}}%
\right\vert .
\end{equation*}
\end{theorem}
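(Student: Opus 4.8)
The plan is to reduce this bound to a statement about the subdominant eigenvalue of a row-stochastic matrix and then apply a classical $\ell_1$-contraction (coefficient-of-ergodicity) estimate; the diagonal scaling furnished by the positive Perron eigenvector $\mathbf{x}$ is exactly what makes the reduction possible.

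First I would set $D=\mathrm{diag}(x_1,\ldots,x_n)$ and pass to the similar matrix $\tilde{B}=D^{-1}BD$, whose entries are $\tilde b_{i,k}=x_k b_{i,k}/x_i$. Since $B\mathbf{x}=\rho(B)\mathbf{x}$, the $i$-th row sum of $\tilde B$ equals $\frac{1}{x_i}\sum_k b_{i,k}x_k=\rho(B)$ for every $i$; thus $\tilde B$ is nonnegative with all row sums equal to $\rho(B)$ (positivity of $\mathbf{x}$ is essential here). Excluding the trivial case $\rho(B)=0$, where $B=0$, the matrix $P=\rho(B)^{-1}\tilde B$ is row-stochastic, i.e. $P\ge 0$ and $P\mathbf{1}=\mathbf{1}$. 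Because $B$ and $\tilde B=\rho(B)P$ are similar, the eigenvalues of $B$ are exactly $\rho(B)$ times those of $P$; in particular $\xi(B)=\rho(B)\,\xi(P)$. Since $p_{i,k}-p_{j,k}=\rho(B)^{-1}x_k\big(b_{i,k}/x_i-b_{j,k}/x_j\big)$, it suffices to prove the stochastic case $\xi(P)\le\frac12\max_{i<j}\sum_{k}|p_{i,k}-p_{j,k}|$ and then multiply through by $\rho(B)$ to recover the stated inequality.

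Next I would bound every eigenvalue $\lambda\neq 1$ of $P$. Let $\mathbf{w}^{T}$ be a (possibly complex) left eigenvector, $\mathbf{w}^{T}P=\lambda\mathbf{w}^{T}$. Applying both sides to $\mathbf{1}$ and using $P\mathbf{1}=\mathbf{1}$ gives $(\lambda-1)\mathbf{w}^{T}\mathbf{1}=0$, so $\mathbf{w}$ is a zero-sum vector, $\sum_i w_i=0$. The heart of the argument is the contraction inequality
\begin{equation*}
\|\mathbf{w}^{T}P\|_{1}\ \le\ \Big(\tfrac12\max_{i<j}\sum_{k}|p_{i,k}-p_{j,k}|\Big)\,\|\mathbf{w}\|_{1}
\qquad\text{for every zero-sum }\mathbf{w}.
\end{equation*}
To prove it I would split $\mathbf{w}=\mathbf{w}^{+}-\mathbf{w}^{-}$ into its positive and negative parts (disjoint supports), both of common mass $s=\frac12\|\mathbf{w}\|_{1}$; then $\mathbf{w}^{T}P=s(\mathbf{u}^{T}-\mathbf{v}^{T})$, where $\mathbf{u},\mathbf{v}$ are probability vectors formed as convex combinations of the rows of $P$, and $\|\mathbf{u}-\mathbf{v}\|_{1}\le\max_{i<j}\sum_{k}|p_{i,k}-p_{j,k}|$ because a difference of two convex combinations of rows is itself an average of row-differences. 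Combining, $|\lambda|\,\|\mathbf{w}\|_{1}=\|\mathbf{w}^{T}P\|_{1}\le\frac12\max_{i<j}\sum_k|p_{i,k}-p_{j,k}|\,\|\mathbf{w}\|_{1}$, whence $|\lambda|\le\frac12\max_{i<j}\sum_k|p_{i,k}-p_{j,k}|$.

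The main obstacle is precisely this contraction estimate: recognizing $\frac12\max_{i<j}\sum_k|p_{i,k}-p_{j,k}|$ as an upper bound for the $\ell_1$-operator norm of $P$ on the hyperplane $\{\mathbf{w}:\sum_i w_i=0\}$, the factor $\tfrac12$ being produced by the $\mathbf{w}=\mathbf{w}^{+}-\mathbf{w}^{-}$ decomposition. Two technical points need care. First, eigenvectors may be complex, but the zero-sum identity and the $\ell_1$ bounds go through verbatim with $|\cdot|$ the complex modulus, and left and right spectra coincide since $P$ and $P^{T}$ share a characteristic polynomial. Second, the argument controls only eigenvalues $\lambda\neq 1$; when $B$ is irreducible (the relevant case for $\mathcal{L}(G)$ and $\mathcal{Q}(G)$) the Perron root $\rho(B)$ is simple, so $\xi(B)$ is attained at some $\lambda\neq\rho(B)$ and we are done, whereas a repeated Perron root forces $P$ to possess two disjointly supported rows, making the right-hand side equal to $\rho(B)=\xi(B)$ so that the bound persists. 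Maximizing over $\lambda\neq 1$ and rescaling by $\rho(B)$ then completes the proof.
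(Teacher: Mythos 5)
The paper contains no proof of Theorem \ref{BAUER}: it is quoted as a known result of Bauer, Deutsch and Stoer \cite{second} and used as a black box, so there is no internal argument to compare yours against. On its own merits, your reduction is the standard route to this estimate: the diagonal similarity $\tilde{B}=D^{-1}BD$ with $D=\mathrm{diag}(x_{1},\ldots,x_{n})$ converts the hypothesis $B\mathbf{x}=\rho(B)\mathbf{x}$, $\mathbf{x}>0$, into constant row sums; $P=\rho(B)^{-1}\tilde{B}$ is row-stochastic (with the degenerate case $\rho(B)=0$, i.e.\ $B=0$, correctly set aside); and the asserted bound is exactly $\xi(P)\leq\tau_{1}(P)$, where $\tau_{1}(P)=\frac{1}{2}\max_{i<j}\sum_{k}\vert p_{i,k}-p_{j,k}\vert$ is the Dobrushin coefficient of ergodicity. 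Your treatment of a repeated Perron root is also sound, though it silently uses two standard facts worth stating: the eigenvalue $1$ of a stochastic matrix is semisimple (power boundedness forbids a Jordan block on the unit circle), and its geometric multiplicity equals the number of closed communicating classes, two of which indeed yield disjointly supported rows and hence $\tau_{1}(P)=1$.

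There is one genuine gap: the complex case of the contraction inequality. The decomposition $\mathbf{w}=\mathbf{w}^{+}-\mathbf{w}^{-}$ into disjointly supported nonnegative parts of equal mass is a device for \emph{real} vectors only; for a complex zero-sum left eigenvector it is meaningless, and your claim that the $\ell_{1}$ estimates ``go through verbatim with $\vert\cdot\vert$ the complex modulus'' is an assertion, not a proof. This matters, since $\xi(B)$ may be attained only at nonreal eigenvalues (e.g.\ the permutation matrix of a directed cycle, all of whose nontrivial eigenvalues are complex). The gap is repairable by a standard averaging trick: since $\vert z\vert=\frac{1}{4}\int_{0}^{2\pi}\left\vert \operatorname{Re}\left( e^{i\theta}z\right) \right\vert \,d\theta$ for every complex $z$, and since $P$ is real, so that $\operatorname{Re}\left( e^{i\theta}\mathbf{w}^{T}P\right) =\left( \operatorname{Re}\left( e^{i\theta}\mathbf{w}\right) \right)^{T}P$ with each $\operatorname{Re}\left( e^{i\theta}\mathbf{w}\right)$ a real zero-sum vector, integrating your real contraction over $\theta$ gives $\Vert\mathbf{w}^{T}P\Vert_{1}\leq\tau_{1}(P)\Vert\mathbf{w}\Vert_{1}$ for complex zero-sum $\mathbf{w}$ with the same constant (splitting into real and imaginary parts instead would lose a factor up to $\sqrt{2}$). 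With that one insertion your proof is complete, and it is essentially the classical Bauer--Deutsch--Stoer/Dobrushin argument.
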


\begin{corollary}
\label{cor}If $B=\left( b_{i,j}\right) \geq 0$ of order $n\times n$ has a
positive eigenvector
\begin{equation*}
\mathbf{x}=\left[ x_{1},x_{2},\ldots,x_{n}\right] ^{T}
\end{equation*}%
corresponding to $\rho \left( B\right) $ then%
\begin{equation}
\frac{1}{2}\max_{1\leq i,j\leq n}\sum_{k=1}^{n}x_{k}\left\vert \frac{b_{i,k}%
}{x_{i}}-\frac{b_{j,k}}{x_{j}}\right\vert \leq \rho \left( B\right) .
\label{x8}
\end{equation}
\end{corollary}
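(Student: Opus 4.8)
The plan is to read the inequality directly off the eigenvector equation and then apply the triangle inequality termwise. First I would use the hypothesis that $\mathbf{x}=[x_1,x_2,\ldots,x_n]^{T}>0$ is an eigenvector of $B$ associated with $\rho(B)$. Writing $B\mathbf{x}=\rho(B)\mathbf{x}$ row by row gives, for each index $i$,
\[
\sum_{k=1}^{n} b_{i,k}x_k=\rho(B)\,x_i,
\]
and, since $x_i>0$, I may divide by $x_i$ to obtain
\[
\sum_{k=1}^{n} x_k\,\frac{b_{i,k}}{x_i}=\rho(B).
\]
The crucial structural fact is that this identity holds for \emph{every} row index $i$, always with the same right-hand side $\rho(B)$.

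Next, for an arbitrary pair $(i,j)$ I would bound each summand using $|a-b|\le a+b$, which is legitimate precisely because both $a=b_{i,k}/x_i$ and $b=b_{j,k}/x_j$ are nonnegative; here the hypotheses $B\ge 0$ and $\mathbf{x}>0$ are exactly what guarantee this sign condition. Multiplying through by $x_k>0$ and summing over $k$ then yields
\[
\sum_{k=1}^{n} x_k\left|\frac{b_{i,k}}{x_i}-\frac{b_{j,k}}{x_j}\right|
\le \sum_{k=1}^{n} x_k\frac{b_{i,k}}{x_i}+\sum_{k=1}^{n} x_k\frac{b_{j,k}}{x_j}=2\rho(B),
\]
where both sums collapse to $\rho(B)$ by the identity established in the first step.

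Finally, I would divide by $2$ and observe that the resulting upper bound $\rho(B)$ is independent of the chosen pair $(i,j)$. Consequently, taking the maximum over all $1\le i,j\le n$ leaves the inequality intact and produces exactly (\ref{x8}). I do not anticipate a genuine obstacle: the only point that requires care is the sign bookkeeping in the triangle-inequality step, namely that the nonnegativity of $B$ together with the positivity of $\mathbf{x}$ is what permits replacing the absolute value of the difference by the sum. It is also worth remarking that the diagonal terms $i=j$ contribute zero and hence do not affect the maximum, so enlarging the index range from $1\le i<j\le n$ (as in Theorem \ref{BAUER}) to $1\le i,j\le n$ is harmless.
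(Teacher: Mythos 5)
Your proposal is correct and follows essentially the same route as the paper: both read off $\sum_{k}x_k b_{i,k}/x_i=\rho(B)$ from the eigenvector equation for every row $i$, apply $|a-b|\le a+b$ (valid since $B\ge 0$ and $\mathbf{x}>0$), and conclude by taking the maximum over pairs $(i,j)$. Your additional remarks on the sign bookkeeping and the harmlessness of enlarging the index range to $1\le i,j\le n$ are accurate but not needed beyond what the paper's own argument already contains.
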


\begin{proof} \ \ We have $B\mathbf{x}=\rho \left( B\right) \mathbf{x.}$ Then%
\begin{equation*}
\sum_{k=1}^{n}b_{i,k}\frac{x_{k}}{x_{i}}=\rho \left( B\right) \text{ and }%
\sum_{k=1}^{n}b_{j,k}\frac{x_{k}}{x_{j}}=\rho \left( B\right)
\end{equation*}%
for all $i,j.$ Hence%
\begin{equation*}
\sum_{k=1}^{n}x_{k}\left\vert \frac{b_{i,k}}{x_{i}}-\frac{b_{j,k}}{x_{j}}%
\right\vert \leq \sum_{k=1}^{n}b_{i,k}\frac{x_{k}}{x_{i}}%
+\sum_{k=1}^{n}b_{j,k}\frac{x_{k}}{x_{j}}=2\rho \left( B\right) .
\end{equation*}%
From this inequality, $\left( \ref{x8}\right)$ is immediate.
\end{proof}

Our next upper bound for $\partial _{1}^{L}(G)$ is given in the following theorem.

\begin{theorem}
If $G$ is a connected graph of order $n$ then%
\begin{equation}\label{n2}
\partial_1^{L}(G)\leq \frac{1}{2}\max_{1\leq i<j\leq n}\left\{
\begin{array}{c}
\emph{Tr}(v_{i})+\emph{Tr}(v_{j})+2d_{i,j} +\sum_{k \neq i,k \neq j} |d_{i,k} - d_{j,k}|
\end{array}%
\right\}
\end{equation}%
and this upper bound does not exceed the upper bound given in (\ref{n1}).
\end{theorem}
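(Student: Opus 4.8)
The plan is to apply the second--eigenvalue estimate of Theorem \ref{BAUER} to the auxiliary nonnegative matrix $\mathcal{B}(G)=\mathcal{L}(G)+\mathbf{1}\mathbf{p}^{T}$ already studied in Theorem \ref{cota1}, and then to invoke Corollary \ref{cor} applied to the \emph{same} matrix to compare the resulting bound with $\sum_{i=1}^{n}p_i$. The whole argument thus recycles the spectral data of $\mathcal{B}(G)$: its eigenvalues are $\partial_1^{L}(G),\ldots,\partial_{n-1}^{L}(G)$ together with $\sum_{i=1}^{n}p_i$, and $\mathbf{x}=\mathbf{1}$ is a positive eigenvector associated with $\rho(\mathcal{B}(G))=\sum_{i=1}^{n}p_i$.

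First I would identify how $\partial_1^{L}(G)$ relates to the second largest modulus $\xi(\mathcal{B}(G))$. Since every distance Laplacian eigenvalue is nonnegative and each satisfies $\partial_i^{L}(G)\le\sum_{i=1}^{n}p_i$, the spectral radius $\sum_{i=1}^{n}p_i$ is the largest modulus among the eigenvalues of $\mathcal{B}(G)$, so removing one copy of it leaves $\partial_1^{L}(G)$ as (at most) the next largest modulus; hence $\partial_1^{L}(G)\le\xi(\mathcal{B}(G))$. Applying Theorem \ref{BAUER} with the positive eigenvector $\mathbf{x}=\mathbf{1}$ (so all $x_k=1$) then yields
\[
\partial_1^{L}(G)\le\xi(\mathcal{B}(G))\le\frac{1}{2}\max_{1\le i<j\le n}\sum_{k=1}^{n}|b_{i,k}-b_{j,k}|.
\]

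Next I would evaluate $\sum_{k=1}^{n}|b_{i,k}-b_{j,k}|$ for fixed $i<j$, using $b_{i,i}=Tr(v_i)+p_i$ and $b_{i,k}=-d_{i,k}+p_k$ for $k\neq i$, and splitting the sum according to $k=i$, $k=j$, and $k\neq i,j$. For $k=i$ the common term $p_i$ cancels and one gets $|b_{i,i}-b_{j,i}|=Tr(v_i)+d_{i,j}$; for $k=j$ the term $p_j$ cancels and one gets $|b_{i,j}-b_{j,j}|=Tr(v_j)+d_{i,j}$ (both expressions being nonnegative, the absolute values resolve cleanly); and for $k\neq i,j$ the term $p_k$ cancels, leaving $|b_{i,k}-b_{j,k}|=|d_{i,k}-d_{j,k}|$. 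Summing these contributions reproduces exactly $Tr(v_i)+Tr(v_j)+2d_{i,j}+\sum_{k\neq i,j}|d_{i,k}-d_{j,k}|$, which is (\ref{n2}).

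Finally, to show that (\ref{n2}) does not exceed (\ref{n1}), I would apply Corollary \ref{cor} to $B=\mathcal{B}(G)$ with $\mathbf{x}=\mathbf{1}$, obtaining $\tfrac{1}{2}\max_{1\le i,j\le n}\sum_{k=1}^{n}|b_{i,k}-b_{j,k}|\le\rho(\mathcal{B}(G))=\sum_{i=1}^{n}p_i$; since the summand is symmetric in $i,j$ and vanishes when $i=j$, the maximum over $1\le i,j\le n$ coincides with the maximum over $1\le i<j\le n$, so the left-hand side is precisely the right-hand side of (\ref{n2}). The only delicate point is the spectral bookkeeping in the first step, namely the justification that $\partial_1^{L}(G)$ is dominated by the second largest modulus of $\mathcal{B}(G)$; the remaining work is the entrywise evaluation, where the main care is in tracking signs so that each absolute value is resolved correctly.
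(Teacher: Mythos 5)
Your proposal is correct and follows essentially the same route as the paper's own proof: you apply Theorem \ref{BAUER} to the same auxiliary matrix $\mathcal{B}(G)=\mathcal{L}(G)+\mathbf{1}\mathbf{p}^{T}$ with the positive eigenvector $\mathbf{1}$, carry out the identical entrywise evaluation of $\sum_{k}|b_{i,k}-b_{j,k}|$, and invoke Corollary \ref{cor} for the comparison with the bound in (\ref{n1}). Your explicit spectral bookkeeping showing $\partial_1^{L}(G)\le\xi(\mathcal{B}(G))$ is a minor refinement of a step the paper states without detail, not a different argument.
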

\begin{proof} \ \ From the proof of Theorem \ref{cota1}, we have that $\textbf{1}$ is an eigenvector corresponding to $\rho(\mathcal{B}(G))$
with $\mathcal{B}(G) =\mathcal{L}(G) +\mathbf{1}\mathbf{p}^{T}$ as defined in $\left( \ref{B}%
\right) \mathbf{.}$ Applying Theorem \ref{BAUER} to $\mathcal{B}(G)$, we obtain
\begin{equation}\label{pn2}
\partial_1^{L}(G)=\xi (\mathcal{B}(G)) \leq \frac{1}{2}%
\max_{1\leq i<j\leq n}\sum_{k=1}^{n} \vert b_{i,k}-b_{j,k} \vert .
\end{equation}
We have
\begin{equation*}
  \sum_{k=1}^{n} \vert b_{i,k}-b_{j,k} \vert = \vert b_{i,i}-b_{j,i} \vert + \vert b_{i,j}-b_{j,j} \vert +
  \sum_{k \neq i, k \neq j} \vert b_{i,k}-b_{j,k} \vert =
\end{equation*}
\begin{equation*}
  \vert \emph{Tr}(v_i) + p_i - (-d_{j,i} +p_i) \vert + \vert -d_{i,j}+p_j - \emph{Tr}(v_j) -p_j \vert +
  \end{equation*}
  \begin{equation*}
    \sum_{k \neq i, k \neq j} \vert -d_{i,k} +p_k -(-d_{j,k} + p_k) \vert =
  \end{equation*}
  \begin{equation*}
  \emph{Tr}(v_i) + \emph{Tr}(v_j) + 2d_{i,j} + \sum_{k \neq i, k \neq j} \vert d_{j,k}-d_{i,k} \vert.
  \end{equation*}
Replacing in (\ref{pn2}), (\ref{n2}) is obtained. From Corollary \ref{cor}, the right hand side of (\ref{n2})
does not exceed $\rho (\mathcal{B}(G))  =\sum_{i=1}^{n}\max_{j\neq i}d(v_i,v_j)$.
\end{proof}

We now recall the following lemma that will play an important role in getting another upper bound on $\partial_1^{L}(G).$

\begin{lemma}
\label{OR2004}
\cite{OR} If $x_1 \geq x_2 \geq \ldots \geq x_m$ are real numbers such that $\sum_{i=1}^{m}x_i =0$ then
\begin{equation*}
  x_1 \leq \sqrt{\frac{m-1}{m}\sum_{i=1}^{m}x_i^{2}}.
\end{equation*}
The equality holds if and only if $x_2=\ldots=x_m=-\frac{x_1}{m-1}$.
\end{lemma}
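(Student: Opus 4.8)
The plan is to isolate $x_1$ using the zero-sum constraint and then apply the Cauchy--Schwarz inequality to the remaining $m-1$ terms. First I would observe that $\sum_{i=1}^{m}x_i=0$ gives $x_1=-\sum_{i=2}^{m}x_i$. Squaring this and pairing the vector $(x_2,\ldots,x_m)$ with the all-ones vector of length $m-1$ via Cauchy--Schwarz yields
\begin{equation*}
x_1^{2}=\left(\sum_{i=2}^{m}x_i\right)^{2}\leq (m-1)\sum_{i=2}^{m}x_i^{2}.
\end{equation*}

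Next I would substitute $\sum_{i=2}^{m}x_i^{2}=\sum_{i=1}^{m}x_i^{2}-x_1^{2}$ into the right-hand side, obtaining $x_1^{2}\leq (m-1)\big(\sum_{i=1}^{m}x_i^{2}-x_1^{2}\big)$. Collecting the $x_1^{2}$ terms gives $m\,x_1^{2}\leq (m-1)\sum_{i=1}^{m}x_i^{2}$, i.e. $x_1^{2}\leq \frac{m-1}{m}\sum_{i=1}^{m}x_i^{2}$. To pass to the stated inequality I would note that $x_1$ is the largest of $m$ numbers whose sum is $0$, hence $x_1$ is at least the average $0$; since $x_1\geq 0$, taking square roots preserves the inequality and produces the claimed bound.

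For the equality characterization I would track the only inequality used, namely Cauchy--Schwarz. Equality there holds precisely when $(x_2,\ldots,x_m)$ is proportional to the all-ones vector, that is $x_2=x_3=\cdots=x_m$. Feeding this back into the constraint $\sum_{i=2}^{m}x_i=-x_1$ gives $(m-1)x_2=-x_1$, so that $x_2=\cdots=x_m=-\frac{x_1}{m-1}$; conversely, this configuration clearly saturates the bound. The only genuine subtlety—really a minor bookkeeping point rather than an obstacle—is justifying $x_1\geq 0$ before taking square roots and confirming that the proportionality condition is both necessary and sufficient for equality.
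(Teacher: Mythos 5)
Your proof is correct and complete: the Cauchy--Schwarz step, the substitution $\sum_{i=2}^{m}x_i^{2}=\sum_{i=1}^{m}x_i^{2}-x_1^{2}$, the observation that $x_1\geq 0$ (as the maximum of numbers with mean zero) before taking square roots, and the equality analysis via proportionality to the all-ones vector are all sound. The paper itself states this lemma without proof, citing \cite{OR}, and your argument is exactly the standard one given there, so there is nothing to compare beyond noting that your handling of the equality case and of the sign of $x_1$ supplies precisely the details the citation leaves implicit.
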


\begin{theorem}\label{Tr}
If $G$ is a connected graph of order $n$ then
\begin{equation}\label{n3}
  \partial_1^{L}(G) \leq \frac{2W(G)}{n-1} +\sqrt{\frac{n-2}{n-1}\bigg(\| \mathcal{L}(G)\|_{F}^{2}-\frac{(2W(G))^{2}}{n-1}\bigg)}.
\end{equation}
The equality holds if and only if $G=K_n$ or $G$ is a connected graph with three distinct distance Laplacian eigenvalues:
$\partial_1^{L}(G)$, \ $\frac{2W(G)-\partial_1^{L}(G)}{n-2}$ \ and  $0$.
\end{theorem}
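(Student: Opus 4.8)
The plan is to apply Lemma \ref{OR2004} to the nonzero distance Laplacian eigenvalues of $G$. Recall from Theorem \ref{t21} and the surrounding discussion that $\mathcal{L}(G)$ has $0$ as a simple eigenvalue, so its spectrum is $\partial_1^L(G) \geq \partial_2^L(G) \geq \cdots \geq \partial_{n-1}^L(G) > 0 = \partial_n^L(G)$. The key observation is that these $n-1$ positive eigenvalues do not sum to zero, so I cannot apply the lemma directly; instead I would center them. Set $m = n-1$ and define the shifted quantities
\begin{equation*}
x_i = \partial_i^L(G) - \frac{2W(G)}{n-1}, \qquad i = 1, \ldots, n-1,
\end{equation*}
where I have used that $\sum_{i=1}^{n-1}\partial_i^L(G) = \mathrm{trace}(\mathcal{L}(G)) = 2W(G)$ (the $0$ eigenvalue contributes nothing to the trace). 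Then $x_1 \geq x_2 \geq \cdots \geq x_{n-1}$ and $\sum_{i=1}^{n-1} x_i = 2W(G) - 2W(G) = 0$, so the hypothesis of Lemma \ref{OR2004} is satisfied with $m = n-1$.

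Next I would compute $\sum_{i=1}^{n-1} x_i^2$ in terms of the Frobenius norm. Expanding the square gives
\begin{equation*}
\sum_{i=1}^{n-1} x_i^2 = \sum_{i=1}^{n-1}\big(\partial_i^L(G)\big)^2 - 2\,\frac{2W(G)}{n-1}\sum_{i=1}^{n-1}\partial_i^L(G) + (n-1)\Big(\frac{2W(G)}{n-1}\Big)^2 = \|\mathcal{L}(G)\|_F^2 - \frac{(2W(G))^2}{n-1},
\end{equation*}
using $\sum_{i=1}^{n-1}(\partial_i^L(G))^2 = \|\mathcal{L}(G)\|_F^2$ (stated in the excerpt) and $\sum_{i=1}^{n-1}\partial_i^L(G) = 2W(G)$ again. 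Applying Lemma \ref{OR2004} yields $x_1 \leq \sqrt{\frac{n-2}{n-1}\big(\|\mathcal{L}(G)\|_F^2 - \frac{(2W(G))^2}{n-1}\big)}$, and adding back $\frac{2W(G)}{n-1}$ to recover $\partial_1^L(G) = x_1 + \frac{2W(G)}{n-1}$ gives exactly the bound (\ref{n3}).

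For the equality characterization I would invoke the equality clause of Lemma \ref{OR2004}: equality holds if and only if $x_2 = x_3 = \cdots = x_{n-1} = -\frac{x_1}{n-2}$. Translating back, this says $\partial_2^L(G) = \cdots = \partial_{n-1}^L(G)$, all equal to the common value $\frac{2W(G)}{n-1} - \frac{x_1}{n-2}$; a short computation using $x_1 = \partial_1^L(G) - \frac{2W(G)}{n-1}$ rewrites this common value as $\frac{2W(G) - \partial_1^L(G)}{n-2}$. Together with the simple eigenvalue $0$, this means $G$ has at most three distinct distance Laplacian eigenvalues: $\partial_1^L(G)$, the repeated value $\frac{2W(G)-\partial_1^L(G)}{n-2}$, and $0$. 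The main subtlety, and the place I would be most careful, is the degenerate case where the repeated value coincides with $\partial_1^L(G)$: if $\partial_1^L(G) = \frac{2W(G)-\partial_1^L(G)}{n-2}$ then $\partial_1^L(G)$ has multiplicity $n-1$, and by Theorem \ref{Han} this forces $G = K_n$. Thus equality corresponds precisely to either $G = K_n$ or a graph with exactly three distinct distance Laplacian eigenvalues of the stated form, which is the claimed characterization; I would close by verifying directly that $K_n$ (which has spectrum $n$ with multiplicity $n-1$ and $0$) does attain the bound, confirming the "if" direction of the characterization.
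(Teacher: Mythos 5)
Your proposal is correct and follows essentially the same route as the paper: centering the $n-1$ nonzero distance Laplacian eigenvalues at $\frac{2W(G)}{n-1}$, applying Lemma \ref{OR2004}, computing the sum of squares via the Frobenius norm, and resolving the equality case with the lemma's equality clause together with Theorem \ref{Han} for the degenerate situation $\partial_1^{L}(G)=\frac{2W(G)-\partial_1^{L}(G)}{n-2}$. Your added explicit check that $K_n$ attains the bound is a small courtesy the paper's proof of Theorem \ref{Tr} leaves implicit, but otherwise the arguments coincide.
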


\begin{proof} \ \ The distance Laplacian eigenvalues of $G$ are
\begin{equation*}
  \partial_1^{L}(G) \geq \partial_2^{L}(G) \geq \ldots \geq \partial_{n-1}^{L}(G) > \partial_n^{L}(G)=0.
\end{equation*}
Then $\sum_{i=1}^{n-1}\partial_i^{L}(G) = 2W(G)$ and $\sum_{i=1}^{n-1}(\partial_i^{L}(G))^{2} = \|\mathcal{L}(G)\|_{F}^{2}.$ Moreover
\begin{equation*}
  \sum_{i=1}^{n-1}\bigg(\partial_i^{L}(G)-\frac{2W(G)}{n-1}\bigg) = 0.
\end{equation*}
Applying Lemma \ref{OR2004}, we get
\begin{equation}\label{R}
  \partial_1^{L}(G) - \frac{2W(G)}{n-1} \leq \sqrt {\frac{n-2}{n-1}\sum_{i=1}^{n-1}\bigg(\partial_{i}^{L}(G)-\frac{2W(G)}{n-1}\bigg)^{2}}.
\end{equation}
Since
\begin{equation*}
  \sum_{i=1}^{n-1}\bigg(\partial_{i}^{L}(G)-\frac{2W(G)}{n-1}\bigg)^{2}=
  \sum_{i=1}^{n-1}\big(\partial_{i}^{L}(G)\big)^{2}-2\frac{2W(G)}{n-1}\sum_{i=1}^{n-1}\partial_{i}^{L}(G)+(n-1)\big(\frac{2W(G)}{n-1}\big)^{2}
\end{equation*}
\begin{equation*}
  =\|\mathcal{L}(G)\|_{F}^{2}-2\frac{\big(2W(G)\big)^{2}}{n-1}+\frac{\big(2W(G)\big)^{2}}{n-1}=\| \mathcal{L}(G)\|_{F}^{2}-\frac{(2W(G))^{2}}{n-1},
\end{equation*}
 the upper bounds (\ref{n3}) and (\ref{R}) are equivalent. Moreover, from Lemma \ref{OR2004}, the equality in \eqref{R} holds if and only if $$\partial_{2}^{L}(G)-\frac{2W(G)}{n-1} \ = \ \cdots \ = \ \partial_{n-1}^{L}(G)-\frac{2W(G)}{n-1} \ = \ -\frac{\partial_{1}^{L}(G)-\frac{2W(G)}{n-1}}{n-2}.$$
Therefore, the equality in \eqref{n3} holds if and only if  $\partial_{2}^{L}(G) \ = \ \cdots \ = \ \partial_{n-1}^{L}(G) \ = \ \frac{2W(G)-\partial_1^{L}(G)}{n-2}$. If $\partial_1^{L}(G) = \frac{2W(G)-\partial_1^{L}(G)}{n-2}$ then $G$ is a graph in which the multiplicity of $\partial_1^{L}(G)$ is $n-1$ and thus, from Theorem \ref{Han}, $G=K_n$. If $\partial_1^{L}(G) \neq \frac{2W(G)-\partial_1^{L}(G)}{n-2}$ then $G$ is a connected graph with three distinct distance Laplacian eigenvalues:
$\partial_1^{L}(G)$, \ $\frac{2W(G)-\partial_1^{L}(G)}{n-2}$ \ and  $0$. The proof is complete.

\end{proof}

In the following example, we apply the above upper bounds to the transmission regular graph but not degree regular graph of the smallest order \cite{Aou}.
\begin{example}\label{Ex2}
Let $G$:
\begin{center}
\begin{tikzpicture}
 \tikzstyle{every node}=[draw,circle,fill=black,minimum size=4pt,
                            inner sep=0pt]
                               % 0 grados y se continua con 180+(360/n)
    \draw (-1.5,-2) node (1) [label=above:$\empty$] {}
          (1.5,-2) node (2) [label=below:$\empty$] {}
         (3,0) node (3) [label=below:$\empty$] {}
          (1.5,2) node (4) [label=below:$\empty$] {}
         (-1.5,2) node (5) [label=below:$\empty$] {}
         (-3,0) node (6) [label=below:$\empty$] {}
         (-1,-1) node (7) [label=below:$\empty$] {}
          (1.5,0) node (8) [label=below:$\empty$] {}
         (-1,1) node (9) [label=below:$\empty$] {};
        \draw (1)--(2);  \draw (1)--(7); \draw (1)--(6);  \draw (2)--(3);        \draw (2)--(7); \draw (2)--(8);         \draw (3)--(8); \draw (3)--(4);  \draw (4)--(9); \draw (4)--(5);  \draw (5)--(9);        \draw (5)--(6); \draw (5)--(9);  \draw (6)--(7); \draw (9)--(6);\draw (4)--(8);
\end{tikzpicture}
\end{center}
To four decimal places, $\partial_1^{L}(G)=19.3723$ and the above upper bounds for $\partial_1^{L}(G)$ are
\begin{center}
$\begin{array}{cccccc}
 (\ref{i1}) & (\ref{d1}) & (\ref{d2}) & (\ref{n1}) & (\ref{n2}) & (\ref{n3}) \\
    29.4919 & 63 & 21.8740 & 27 & 21 & 21.4782 \\
\end{array}$
\end{center}
\end{example}

In the following proposition, for a transmission regular graph $G$, we restate the upper bounds for $\partial_1^{L}(G)$ given in (\ref{d2}) and (\ref{n3}) and we prove that the upper bound in (\ref{n3}) improves the upper bound in (\ref{d2}).

\begin{proposition}
Let $G$ be a connected $k-$ transmission regular graph of order $n$. Then
\begin{enumerate}
\item
\begin{equation}\label{r1}
  \partial_1^{L}(G) < k + \sqrt{\|\mathcal{D}(G)\|_{F}^{2}-k^{2}}
\end{equation}
\item \begin{equation}\label{r2}
  \partial_1^{L}(G) \leq \frac{n}{n-1} k + \sqrt{ \frac{n-2}{n-1} \bigg(\|\mathcal{D}(G)\|_{F}^{2}-\frac{n}{n-1} k^{2}\bigg)}.
\end{equation}
\item Let
\begin{equation*}
  c_1 = k + \sqrt{\|\mathcal{D}(G)\|_{F}^{2}-k^{2}}
\end{equation*}
and
\begin{equation*}
  c_2 = \frac{n}{n-1} k + \sqrt{ \frac{n-2}{n-1} \bigg(\|\mathcal{D}(G)\|_{F}^{2}-\frac{n}{n-1} k^{2}\bigg)}.
\end{equation*}
Hence $c_2 \leq c_1$.
\end{enumerate}
\end{proposition}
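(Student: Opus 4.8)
The plan is to derive (\ref{r1}) and (\ref{r2}) as immediate specializations of (\ref{d2}) and (\ref{n3}), and then to establish $c_{2}\le c_{1}$ by elementary algebra. First I would record what $k$-transmission regularity forces: since $Tr(v_{i})=k$ for all $i$, one has $\max_{i}Tr(v_{i})=k$, $\sum_{i=1}^{n}(Tr(v_{i}))^{2}=nk^{2}$, and $2W(G)=\sum_{i}Tr(v_{i})=nk$; moreover, as $\mathcal{L}(G)=kI_{n}-\mathcal{D}(G)$ and $\mathcal{D}(G)$ has zero diagonal, the Frobenius norm splits as $\|\mathcal{L}(G)\|_{F}^{2}=nk^{2}+\|\mathcal{D}(G)\|_{F}^{2}$. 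Feeding $\max_{i}Tr(v_{i})=k$ and $\tfrac1n\sum_{i}(Tr(v_{i}))^{2}=k^{2}$ into (\ref{d2}) gives (\ref{r1}) at once, while inserting $2W(G)=nk$ and the split norm into (\ref{n3}) and simplifying the radicand (its $k^{2}$-coefficient collapses to $-\tfrac{n}{n-1}$) yields (\ref{r2}). These two specializations are routine.

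The substance is item 3. Writing $D=\|\mathcal{D}(G)\|_{F}^{2}$, I would first check that both radicands are genuinely nonnegative: Cauchy--Schwarz applied to each row of $\mathcal{D}(G)$, whose $n-1$ nonzero entries sum to $k$, gives $\sum_{j}d_{i,j}^{2}\ge \tfrac{k^{2}}{n-1}$, and summation over $i$ yields $D\ge\tfrac{n}{n-1}k^{2}>k^{2}$. Subtracting $\tfrac{n}{n-1}k$ from both $c_{1}$ and $c_{2}$ then recasts the claim $c_{2}\le c_{1}$ as
\[
\sqrt{\tfrac{n-2}{n-1}\left(D-\tfrac{n}{n-1}k^{2}\right)}\ \le\ \sqrt{D-k^{2}}-\tfrac{k}{n-1}.
\]
The same bound $D\ge\tfrac{n}{n-1}k^{2}$ gives $\sqrt{D-k^{2}}\ge \tfrac{k}{\sqrt{n-1}}\ge\tfrac{k}{n-1}$, so the right-hand side is nonnegative and squaring both sides is legitimate.

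The final step, which I expect to be the only delicate point, is the squaring. After expanding, the pure $k^{2}$-terms cancel exactly (their combined coefficient $\tfrac{1}{(n-1)^{2}}-1+\tfrac{n(n-2)}{(n-1)^{2}}$ is $0$), and the inequality reduces to $\tfrac{1}{n-1}\left(D-2k\sqrt{D-k^{2}}\right)\ge0$, that is $D\ge 2k\sqrt{D-k^{2}}$. Squaring once more turns this into $(D-2k^{2})^{2}\ge0$, which is true unconditionally; hence $c_{2}\le c_{1}$. The entire difficulty is thus concentrated in spotting these two cancellations — the vanishing $k^{2}$-coefficient and the perfect square $(D-2k^{2})^{2}$ — so the cleanest writeup keeps $D$ as a single symbol and postpones all numerical simplification until exactly those moments.
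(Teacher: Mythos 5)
Your proof is correct and takes essentially the same route as the paper: items 1 and 2 are obtained by the same specializations of (\ref{d2}) and (\ref{n3}) (using $\max_i Tr(v_i)=k$, $\sum_i (Tr(v_i))^2=nk^2$, $2W(G)=nk$ and $\|\mathcal{L}(G)\|_F^2=nk^2+\|\mathcal{D}(G)\|_F^2$), and item 3 is the same double-squaring computation that collapses to the perfect square $\left(\|\mathcal{D}(G)\|_F^2-2k^2\right)^2\geq 0$. If anything you are slightly more careful than the paper, whose chain of ``$\Leftrightarrow$'' squarings silently assumes the sign conditions (nonnegativity of the radicands and of the quantities being squared) that your Cauchy--Schwarz bound $\|\mathcal{D}(G)\|_F^2\geq \frac{n}{n-1}k^2$ explicitly supplies.
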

\begin{proof}
\begin{enumerate}
\item For $k-$ transmission regular graph of order $n$,
we have
\begin{equation*}
  \max_{1\leq i \leq n}\emph{Tr}(v_i) = k
\end{equation*}
 and
 \begin{equation*}
   \frac{\sum_{i=1}^{n}\big(\emph{Tr}(v_i)\big)^{2}}{n}=k^{2}.
 \end{equation*}
  Thus (\ref{r1}) is immediate from (\ref{d2}).
\item Moreover, for a such graph $G$, we have
\begin{equation*}
  \frac{2W(G)}{n-1}=\frac{nk}{n-1}
\end{equation*}
 and
 \begin{equation*}
\| \mathcal{L}(G)\|_{F}^{2}-\frac{(2W(G))^{2}}{n-1} = \| \mathcal{D}(G)\|_{F}^{2} + nk^{2}-\frac{n^{2}k^{2}}{n-1}=\| \mathcal{D}(G)\|_{F}^{2}-\frac{n}{n-1}k^{2}.
 \end{equation*}
 Hence (\ref{r2}) is immediate from (\ref{n3}).
 \item  For brevity, let $\Delta = \| \mathcal{D}(G)\|_{F}$. We will prove that $c_1-c_2 \geq 0$. We have
 \begin{equation*}
   c_1-c_2=-\frac{1}{n-1}k+\sqrt{\Delta^{2}-k^{2}}-\sqrt{ \frac{n-2}{n-1} \bigg(\Delta^{2}-\frac{n}{n-1} k^{2}\bigg)}
 \end{equation*}
Hence
\begin{equation*}
  c_1-c_2 \geq 0 \Leftrightarrow
\end{equation*}
\begin{equation*}
  \sqrt{\Delta^{2}-k^{2}}-\sqrt{ \frac{n-2}{n-1} \bigg(\Delta^{2}-\frac{n}{n-1} k^{2}\bigg)}\geq \frac{1}{n-1}k \Leftrightarrow
\end{equation*}
\begin{equation*}
\Delta^{2}-k^{2} +\frac{n-2}{n-1} \bigg(\Delta^{2}-\frac{n}{n-1} k^{2}\bigg) -2 \sqrt{\Delta^{2}-k^{2}}\sqrt{ \frac{n-2}{n-1} \bigg(\Delta^{2}-\frac{n}{n-1} k^{2}\bigg)}\geq \frac{1}{(n-1)^{2} }k^{2}\Leftrightarrow
\end{equation*}
\begin{equation*}
  \frac{2n-3}{n-1}\Delta^{2}-2k^{2} \geq 2 \sqrt{\Delta^{2}-k^{2}}\sqrt{ \frac{n-2}{n-1} \bigg(\Delta^{2}-\frac{n}{n-1} k^{2}\bigg)} \Leftrightarrow
\end{equation*}
\begin{equation*}
  \frac{(2n-3)^{2}}{(n-1)^{2}}\Delta^{4}+4k^{4}-4\frac{2n-3}{n-1}\Delta^{2}k^{2} \geq 4(\Delta^{2}-k^{2})(\frac{n-2}{n-1} \bigg(\Delta^{2}-\frac{n}{n-1} k^{2}\bigg))\Leftrightarrow
\end{equation*}
\begin{equation*}
  \bigg(\frac{(2n-3)^{2}}{(n-1)^{2}}-4\frac{n-2}{n-1}\bigg)\Delta^{4}+4\bigg(1-\frac{(n-2)n}{(n-1)^{2}}\bigg)k^{4}+
  4\bigg(-\frac{2n-3}{n-1}+\frac{n-2}{n-1}+\frac{(n-2)n}{(n-1)^{2}}\bigg)\Delta^{2}k^{2} \geq 0 \Leftrightarrow
\end{equation*}
\begin{equation*}
  \bigg((2n-3)^{2}-4(n-2)(n-1)\bigg)\Delta^{4}+4\bigg((n-1)^{2}-(n-2)n\bigg)k^{4}
\end{equation*}
\begin{equation*}
  +4\bigg(-(2n-3)(n-1)+(n-2)(n-1)+n(n-2)\bigg)\Delta^{2}k^{2} \geq 0 \Leftrightarrow
\end{equation*}
\begin{equation*}
  \Delta^{4}+4k^{4}-4\Delta^{2}k^{2} \geq 0 \Leftrightarrow
\end{equation*}
\begin{equation*}
  \bigg(\Delta^{2}-2k^{2}\bigg)^{2} \geq 0.
\end{equation*}
Since the last inequality is clearly true, it follows that $c_1 - c_2 \geq 0$.

\end{enumerate}
\end{proof}

In the next example, we apply the above upper bounds to a graph which is not a transmission regular graph.
\begin{example}
\label{ntr}
Let $G$ be the graph

\begin{center}
\definecolor{qqqqff}{rgb}{0.,0.,1.}
\begin{tikzpicture}
\draw (0.,6.)-- (4.,6.);
\draw (0.,6.)-- (4.,5.);
\draw (0.,6.)-- (4.,4.);
\draw (0.,5.)-- (4.,6.);
\draw (0.,5.)-- (4.,5.);
\draw (0.,5.)-- (4.,4.);
\draw (0.,4.)-- (4.,6.);
\draw (0.,4.)-- (4.,4.);
\draw (0.,3.)-- (4.,5.);
\draw (0.,3.)-- (4.,3.);
\draw (0.,4.)-- (4.,1.);
\draw (0.,3.)-- (4.,2.);
\draw (0.,2.)-- (4.,3.);
\draw (0.,2.)-- (4.,2.);
\draw (0.,2.)-- (4.,1.);
\draw (0.,1.)-- (4.,3.);
\draw (0.,1.)-- (4.,2.);
\draw (0.,1.)-- (4.,1.);
\begin{scriptsize}
\draw [fill=qqqqff] (0.,1.) circle (2.5pt);
%\draw[color=qqqqff] (0.05,1.3) node {$26$};
\draw [fill=qqqqff] (0.,2.) circle (2.5pt);
%\draw[color=qqqqff] (0.05,2.3) node {$22$};
\draw [fill=qqqqff] (0.,3.) circle (2.5pt);
%\draw[color=qqqqff] (0.05,3.3) node {$22$};
\draw [fill=qqqqff] (0.,4.) circle (2.5pt);
%\draw[color=qqqqff] (0.05,4.3) node {$22$};
\draw [fill=qqqqff] (0.,5.) circle (2.5pt);
%\draw[color=qqqqff] (0.05,5.3) node {$24$};
\draw [fill=qqqqff] (0.,6.) circle (2.5pt);
%\draw[color=qqqqff] (0.05,6.3) node {$24$};
\draw [fill=qqqqff] (4.,1.) circle (2.5pt);
%\draw[color=qqqqff] (4.05,1.3) node {$24$};
\draw [fill=qqqqff] (4.,2.) circle (2.5pt);
%\draw[color=qqqqff] (4.05,2.3) node {$24$};
\draw [fill=qqqqff] (4.,3.) circle (2.5pt);
%\draw[color=qqqqff] (4.05,3.3) node {$24$};
\draw [fill=qqqqff] (4.,4.) circle (2.5pt);
%\draw[color=qqqqff] (4.05,4.3) node {$24$};
\draw [fill=qqqqff] (4.,5.) circle (2.5pt);
%\draw[color=qqqqff] (4.05,5.3) node {$24$};
\draw [fill=qqqqff] (4.,6.) circle (2.5pt);
%\draw[color=qqqqff] (4.05,6.3) node {$24$};
\draw[color=black] (2.06,5.84);
\draw[color=black] (1.98,5.36);
\draw[color=black] (1.9,4.88);
\draw[color=black] (2.12,5.36);
\draw[color=black] (2.06,4.84);
\draw[color=black] (1.98,4.36);
\draw[color=black] (2.2,4.88);
\draw[color=black] (2.06,3.84);
\draw[color=black] (2.2,3.88);
\draw[color=black] (2.06,2.84);
\draw[color=black] (1.86,2.42);
\draw[color=black] (1.98,2.36);
\draw[color=black] (2.12,2.36);
\draw[color=black] (2.06,1.84);
\draw[color=black] (1.98,1.36);
\draw[color=black] (2.2,1.88);
\draw[color=black] (2.12,1.36);
\draw[color=black] (2.06,0.84);
\end{scriptsize}
\end{tikzpicture}
\end{center}

To four decimal places, $\partial_1^{L}(G)=33.2915$ and the mentioned bounds for $\partial_1^{L}(G)$ are
\begin{center}
$\begin{array}{cccccc}
    (\ref{i1}) & (\ref{d1}) & (\ref{d2}) & (\ref{n1}) & (\ref{n2}) & (\ref{n3}) \\
    54.5307 & 164 & 38.5963 & 45 & 36 & 36.4199 \\
\end{array}$
\end{center}
\end{example}

Based on numerous numerical computations, we propose the following conjecture
\begin{conjecture}
For non transmission regular graphs the upper bound for $\partial_1^{L}(G)$ in (\ref{n3}) improves the upper bound in (\ref{d2}).
\end{conjecture}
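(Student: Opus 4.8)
The plan is to recast the comparison as an inequality between two explicit functions of a few scalar invariants and then reduce it, by isolating and squaring the two square roots, to a polynomial inequality constrained by the relations these invariants must satisfy for a genuine graph. Write $M=\max_{1\le i\le n}Tr(v_i)$, $\sigma=2W(G)=\sum_{i=1}^n Tr(v_i)$, $\tau=\sum_{i=1}^n Tr(v_i)^2$ and $\delta=\|\mathcal D(G)\|_F^2$. Since the diagonal of $\mathcal L(G)$ carries the transmissions and its off-diagonal entries are $-d_{i,j}$, one has the identity $\|\mathcal L(G)\|_F^2=\tau+\delta$. With this, the right-hand side of (\ref{d2}) becomes $U_D=M+\sqrt{\delta-\tau/n}$ and the right-hand side of (\ref{n3}) becomes
$$U_R=\frac{\sigma}{n-1}+\sqrt{\frac{n-2}{n-1}\Big(\tau+\delta-\frac{\sigma^2}{n-1}\Big)}.$$
The goal is to prove $U_R\le U_D$ whenever $G$ is not transmission regular.

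First I would record the constraints tying $M,\sigma,\tau,\delta$ together, since the inequality is false for unconstrained parameters. Cauchy--Schwarz gives $\tau\ge\sigma^2/n$, with equality exactly in the transmission-regular case; thus the hypothesis that $G$ is \emph{not} transmission regular furnishes the strict slack $\tau>\sigma^2/n$ together with $M>\sigma/n$. From $Tr(v_i)^2=\big(\sum_j d_{i,j}\big)^2\ge\sum_j d_{i,j}^2$ one gets $\tau\ge\delta$, and from $Tr(v_i)^2\le (n-1)\sum_{j\ne i}d_{i,j}^2$ one gets $\delta\ge\tau/(n-1)$; hence $\delta$ is confined to $[\tau/(n-1),\tau]$, which in particular makes both radicands positive. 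Finally $\tau\le M\sigma$ and $M\ge\sigma/n$. These are the relations I expect to feed into the polynomial inequality.

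The reduction proceeds by writing $U_R\le U_D$ as $\sqrt B\le \sqrt A+c$, where $A=\delta-\tau/n$, $B=\frac{n-2}{n-1}\big(\tau+\delta-\frac{\sigma^2}{n-1}\big)$ and $c=M-\frac{\sigma}{n-1}$; both $A$ and $B$ are nonnegative and \emph{affine} in $\delta$, with $A$ having the larger slope. One then cases on the sign of $c$: when $c\ge0$ square directly, and when $c<0$ rewrite as $\sqrt A\ge\sqrt B+|c|$ and square; in each case, after moving the surviving radical to one side and squaring a second time, the statement collapses to a polynomial inequality that is quadratic in $\delta$. Because $\delta$ ranges only over $[\tau/(n-1),\tau]$, the natural finish is to treat this quadratic on that interval and establish nonnegativity at the endpoints together with a sign condition on the leading coefficient (or a single interior minimum), all expressed through $M,\sigma,\tau$. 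As a sanity check, at the transmission-regular boundary $M=\sigma/n$, $\tau=\sigma^2/n$ (so $c=-k/(n-1)<0$) the reduction must recover the identity $(\delta-2k^2)^2\ge0$ already used in the preceding proposition.

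The hard part will be this last step: the four invariants are far from independent, and I do not expect the quadratic-in-$\delta$ inequality to follow from the elementary relations above alone. The delicate regime is that of graphs close to transmission regular, where $c$ and $\sqrt B-\sqrt A$ are both small and of competing signs, so the inequality holds only by a second-order margin and requires capturing how $M-\sigma/n$, the transmission variance $\tau-\sigma^2/n$, and $\delta$ are simultaneously constrained by the distance distribution. Making this margin uniform over all non-transmission-regular graphs --- rather than verifying it case by case numerically --- is precisely the obstacle that keeps the statement a conjecture; a plausible line of attack is to parametrize by the full transmission sequence $Tr(v_1),\dots,Tr(v_n)$ and exploit sharper majorization-type bounds on $\delta$ in terms of that sequence to pin down the quadratic.
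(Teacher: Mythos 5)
You have not proved the statement, and neither does the paper: this is stated there as a \emph{conjecture}, supported only by numerical experiments (e.g., Examples \ref{Ex2} and \ref{ntr}), so there is no proof of record to match your argument against. Your reformulation itself is sound and consistent with the paper: the identity $\|\mathcal{L}(G)\|_F^2=\tau+\delta$ is correct, your $U_D$ and $U_R$ are faithful restatements of the right-hand sides of (\ref{d2}) and (\ref{n3}), the listed constraints ($\tau\ge\sigma^2/n$, $\tau/(n-1)\le\delta\le\tau$, $\tau\le M\sigma$, $M\ge\sigma/n$) are all valid, and your boundary sanity check correctly recovers the identity $\left(\Delta^2-2k^2\right)^2\ge 0$ from the paper's proposition on transmission regular graphs. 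But the proposal stops exactly where the conjecture begins, and you concede as much in your final paragraph: the quadratic-in-$\delta$ inequality obtained after the two squarings is never established.

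Worse, that final step is provably out of reach from the invariants you allow yourself: the scalar relaxation is false. Take $n=4$ with transmission profile $(3,9,9,9)$, so $M=9$, $\sigma=30$, $\tau=252$, and put $\delta=\tau/(n-1)=84$. All of your constraints hold ($M>\sigma/n=7.5$, $\tau>\sigma^2/n=225$, $\tau\le M\sigma=270$, $\delta\in[84,252]$), yet $U_D=9+\sqrt{21}\approx 13.58$ while $U_R=10+\sqrt{24}\approx 14.90$, i.e.\ $U_R>U_D$. Of course no connected graph realizes these values (a vertex of transmission $3$ on $4$ vertices is adjacent to all others, forcing every other transmission to be at most $5$; and $\delta=\tau/(n-1)$ forces each row of the distance matrix to be constant), which is precisely the point: the conjecture, if true, depends on realizability constraints of distance matrices --- integrality, triangle inequalities, the coupling between the transmission spread $M-\sigma/n$, the variance $\tau-\sigma^2/n$, and $\delta$ --- none of which survive your reduction to four scalars. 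So the proposal is an honest and well-organized restatement of the problem, with a correct verification at the transmission-regular boundary, but the gap you flag is not a technical loose end to be tightened later; it is the entire content of the statement, which accordingly remains open in the paper as well.
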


\section{Bounds on $\partial_1^{Q}(G)$}
We begin observing that for a $k-$ transmission regular graph its distance signless Laplacian spectral radius is $2k$. In fact, $\mathcal{Q}(G)\textbf{1}=2k\textbf{1}$ and thus, by Theorem \ref{PF}, $\partial_1^{Q}(G)=2k$. For instance, the graph in Example \ref{Ex2} is $14-$ transmission regular graph and then $\partial_1^{Q}(G)=28$. \\ \\
From now on, for $i=1,\ldots,n$, $R_i(M)$ denotes the sum of the $i-$ row of a matrix $M$ of order $n$.

We recall another important result on nonnegative matrix.
\begin{theorem}
\cite{minc}
\label{fr}
Let $A = (a_{i,j})$ be an $n\times n$ nonnegative matrix with spectral radius $\rho(A)$. Then
\begin{equation}\label{row}
\min_{1 \leq i \leq n}R_i(A) \leq \rho(A) \leq \max_{1 \leq i \leq n}R_i(A).
\end{equation}
Moreover, if $A$ is an irreducible matrix, then equality holds on either side (and hence both sides) of (\ref{row}) if and only if all row sums of $A$ are equal.
\end{theorem}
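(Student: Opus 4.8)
The plan is to realize $\rho(A)$ as a weighted average of the row sums by passing to the transpose. Since $A$ and $A^{T}$ share the same characteristic polynomial, they have the same eigenvalues and hence $\rho(A^{T})=\rho(A)$. Because $A^{T}$ is again nonnegative, Theorem \ref{PF} supplies a nonnegative eigenvector $\mathbf{y}=[y_1,\ldots,y_n]^{T}\geq \mathbf{0}$, $\mathbf{y}\neq\mathbf{0}$, with $A^{T}\mathbf{y}=\rho(A)\mathbf{y}$.

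Writing this eigenvalue equation componentwise gives $\sum_{j}a_{j,i}y_j=\rho(A)y_i$ for each $i$. Summing over $i$ and interchanging the order of summation, the left-hand side becomes $\sum_{j}y_j\sum_{i}a_{j,i}=\sum_{j}y_jR_j(A)$, so that
\begin{equation*}
\sum_{j=1}^{n}y_jR_j(A)=\rho(A)\sum_{j=1}^{n}y_j.
\end{equation*}
Since $y_j\geq 0$ and $\sum_{j}y_j>0$, this exhibits $\rho(A)$ as a convex combination of the row sums $R_1(A),\ldots,R_n(A)$, and therefore
\begin{equation*}
\min_{1\leq i\leq n}R_i(A)\ \leq\ \rho(A)\ \leq\ \max_{1\leq i\leq n}R_i(A),
\end{equation*}
which is (\ref{row}). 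This single identity delivers both inequalities at once, which I find cleaner than bounding via the right Perron eigenvector of $A$, whose smallest entry may vanish in the reducible case.

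For the equality statement, assume $A$ is irreducible; then $A^{T}$ is irreducible as well, and Theorem \ref{PF} now forces $\mathbf{y}>\mathbf{0}$. If $\rho(A)=\max_{i}R_i(A)=:R$, rewrite the identity as $\sum_{j}y_j\big(R-R_j(A)\big)=0$; every summand is nonnegative because $y_j>0$ and $R_j(A)\leq R$, so each must vanish, giving $R_j(A)=R$ for all $j$, i.e. all row sums are equal. The argument for equality on the lower side is symmetric. Conversely, if all row sums equal a common value $r$, then $A\mathbf{1}=r\mathbf{1}$, so $r$ is an eigenvalue with the positive eigenvector $\mathbf{1}$; by the irreducible case of Theorem \ref{PF} the positive eigenvector belongs to $\rho(A)$, whence $\rho(A)=r=\min_{i}R_i(A)=\max_{i}R_i(A)$ and equality holds on both sides simultaneously.

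The argument is essentially bookkeeping once the left eigenvector is introduced; the only genuine point requiring care is the equality analysis, where the strict positivity of $\mathbf{y}$ (equivalently, the irreducibility of $A^{T}$, hence of $A$) is exactly what converts the vanishing of a nonnegative sum into the pointwise conclusion $R_j(A)=R$ for every $j$. I expect this to be the main obstacle only in the sense of getting the logical direction of the \emph{iff} right, since the forward inequalities themselves are immediate from the weighted-average representation.
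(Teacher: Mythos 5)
Your proof is correct, but there is nothing in the paper to compare it against line by line: Theorem \ref{fr} is quoted as a classical result from Minc's book \cite{minc} and the paper supplies no proof of it. Your left-eigenvector argument is a standard and clean route to the full statement: since $\rho(A^{T})=\rho(A)$ and $A^{T}$ is nonnegative, Theorem \ref{PF} gives $\mathbf{y}\geq\mathbf{0}$, $\mathbf{y}\neq\mathbf{0}$, and the identity $\sum_{j}y_{j}R_{j}(A)=\rho(A)\sum_{j}y_{j}$ exhibits $\rho(A)$ as a convex combination of the row sums --- taken over those $j$ with $y_{j}>0$, which is all one needs when $\mathbf{y}$ has zero entries in the reducible case --- so both inequalities in (\ref{row}) drop out at once. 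The usual textbook argument instead takes the right Perron vector $\mathbf{x}$ and evaluates the eigenvalue equation at the index of its largest component to get the upper bound, but then the lower bound for reducible $A$ requires either a perturbation/limiting argument (replace $A$ by $A+\varepsilon J$, $J$ the all-ones matrix, and let $\varepsilon\to 0$) or a passage to $A^{T}$ anyway; your single weighted-average identity is more economical, exactly as you observe.

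Two small points to tighten. First, you use that $A^{T}$ is irreducible when $A$ is; this is standard (reversing all arcs of a digraph preserves strong connectivity, or equivalently a common permutation block-triangularizes $A$ and $A^{T}$ simultaneously) but deserves the one-line justification. Second, in the converse of the equality statement you invoke the fact that a positive eigenvector of an irreducible nonnegative matrix must belong to $\rho(A)$; this is true but is not literally contained in Theorem \ref{PF} as stated in the paper. You can sidestep it entirely: if all row sums equal $r$, then $\min_{i}R_{i}(A)=\max_{i}R_{i}(A)=r$, so the already-proved sandwich (\ref{row}) forces $\rho(A)=r$ and equality on both sides, with no appeal to irreducibility at all (alternatively, substitute $R_{j}(A)=r$ into your own identity). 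Neither point is a genuine gap; the proof as written is sound.
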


\begin{corollary}
\label{cfr}
Let $G$ be a connected graph of order $n$ with $V(G)=\{v_1\ldots,v_n\}$. Then
\begin{equation}\label{tb}
  2 \min_{1 \leq i \leq n)}\emph{Tr}(v_i) \leq \partial_1^{Q}(G) \leq 2 \max_{1 \leq i \leq n)}\emph{Tr}(v_i).
\end{equation}
Moreover, equality holds on either side (and hence both sides) of (\ref{tb}) if and only if $G$ is a transmission regular graph.
\end{corollary}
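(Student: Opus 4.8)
The plan is to apply Theorem \ref{fr} directly to the distance signless Laplacian matrix $\mathcal{Q}(G)$, so the entire argument reduces to identifying the row sums of $\mathcal{Q}(G)$ and invoking the irreducibility needed for the equality characterization. First I would observe that $\mathcal{Q}(G)=\emph{Tr}(G)+\mathcal{D}(G)$ is a nonnegative matrix: its diagonal entries are the transmissions $Tr(v_i)\geq 0$ and its off-diagonal entries are the distances $d_{i,j}>0$. In fact, since every off-diagonal entry is a strictly positive distance, $\mathcal{Q}(G)$ is a positive (hence irreducible) matrix, a point I will use for the equality case.

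The key computation is the $i$-th row sum. Since the $i$-th row of $\mathcal{D}(G)$ sums to $\sum_{k=1}^{n}d_{i,k}=Tr(v_i)$ by the definition of transmission, and the diagonal contributes the additional term $Tr(v_i)$, we get
\begin{equation*}
R_i(\mathcal{Q}(G))=Tr(v_i)+\sum_{k=1}^{n}d_{i,k}=2\,Tr(v_i).
\end{equation*}
Thus $\min_{1\leq i\leq n}R_i(\mathcal{Q}(G))=2\min_i Tr(v_i)$ and $\max_{1\leq i\leq n}R_i(\mathcal{Q}(G))=2\max_i Tr(v_i)$.

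Next I would apply the inequality \eqref{row} of Theorem \ref{fr} to $A=\mathcal{Q}(G)$, whose spectral radius is $\rho(\mathcal{Q}(G))=\partial_1^{Q}(G)$. Substituting the row sums computed above immediately yields
\begin{equation*}
2\min_{1\leq i\leq n}Tr(v_i)\leq \partial_1^{Q}(G)\leq 2\max_{1\leq i\leq n}Tr(v_i),
\end{equation*}
which is \eqref{tb}. For the equality statement, I would invoke the second part of Theorem \ref{fr}: because $\mathcal{Q}(G)$ is irreducible (being positive), equality holds on either side of \eqref{row} if and only if all row sums of $\mathcal{Q}(G)$ are equal. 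By the row-sum identity, this is exactly the condition $2\,Tr(v_i)=2\,Tr(v_j)$ for all $i,j$, i.e.\ $Tr(v_i)$ is constant over $V(G)$, which is precisely the statement that $G$ is transmission regular.

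There is essentially no genuine obstacle here, since the result is a specialization of a standard spectral bound; the only point requiring a moment of care is the verification that the row sum of $\mathcal{Q}(G)$ equals $2\,Tr(v_i)$ rather than $Tr(v_i)$, and the observation that the strict positivity of $\mathcal{Q}(G)$ supplies the irreducibility hypothesis of Theorem \ref{fr} needed to make the equality condition an ``if and only if''. Everything else is a direct substitution.
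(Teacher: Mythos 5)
Your proposal is correct and follows exactly the paper's own argument: the paper likewise notes that $\mathcal{Q}(G)$ is a positive matrix with $R_i(\mathcal{Q}(G)) = 2\,\emph{Tr}(v_i)$ and deduces the corollary immediately from Theorem \ref{fr}. You simply spell out the row-sum computation and the irreducibility point that the paper leaves implicit.
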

\begin{proof} \ \ Since $\mathcal{Q}(G)$ is a positive matrix and $2 \emph{Tr}(v_i) = R_i(\mathcal{Q}(G))$ for $i=1,\ldots,n$, the corollary is an immediate consequence of Theorem \ref{fr}.
\end{proof}

We mention below some already known bounds on $\partial^{Q}_{1}$.

For a simple connected graph $G$ with $n$ vertices such that $Tr(v_{1})\geq Tr(v_{2})\geq \cdots \geq Tr(v_{n})$, the authors in \cite{Hong} defined the \textit{second distance degree} of a vertex $v_{i}\in V(G)$, denoted by $\mathcal{T}_{i}$, as $\mathcal{T}_{i}=\sum\limits_{k=1}^{n}d_{i,k}Tr(v_{k})$, for each $i=1,\ldots,n$.
\begin{theorem}\label{R1}
(\cite{Hong}, Theorem 3.8) Let $G$ be a simple connected graph with $n$ vertices such that $Tr(v_{1})\geq Tr(v_{2})\geq \cdots \geq Tr(v_{n})$. Then,
\begin{align}\label{i3}
\min\limits_{1\leq i \leq n}\bigg\{Tr(v_{i})+\frac{\mathcal{T}_{i}}{Tr(v_{i})}\bigg\} \ &\leq \ \partial^{Q}_{1}(G)
\end{align}
\begin{align}\label{i4}
\partial^{Q}_{1}(G) \ &\leq \ \max\limits_{1\leq i \leq n}\bigg\{Tr(v_{i})+\frac{\mathcal{T}_{i}}{Tr(v_{i})}\bigg\} \ .
\end{align}
Moreover, any equality holds if and only if $G$ has the same value \ $Tr(v_{i})+\frac{\mathcal{T}_{i}}{Tr(v_{i})}$ \ for all $i$.
\end{theorem}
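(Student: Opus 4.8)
The plan is to apply the row-sum bounds of Theorem \ref{fr} not to $\mathcal{Q}(G)$ itself --- which only reproduces Corollary \ref{cfr} --- but to a diagonally scaled matrix that is similar to $\mathcal{Q}(G)$ and therefore shares its spectral radius. Since $G$ is connected, every transmission $Tr(v_i)$ is positive, so the diagonal matrix $D = \mathrm{diag}[Tr(v_1),\ldots,Tr(v_n)]$ is invertible. I would set $\widetilde{\mathcal{Q}} = D^{-1}\mathcal{Q}(G)D$. As a product of a positive matrix with positive diagonal matrices, $\widetilde{\mathcal{Q}}$ is again a positive (hence irreducible, nonnegative) matrix, and being similar to $\mathcal{Q}(G)$ it has spectral radius $\rho(\widetilde{\mathcal{Q}}) = \partial_1^{Q}(G)$.

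The key computation is the $i$-th row sum of $\widetilde{\mathcal{Q}}$. Writing $\mathbf{t} = [Tr(v_1),\ldots,Tr(v_n)]^{T}$ and using that the $(i,j)$ entry of $\widetilde{\mathcal{Q}}$ equals $\frac{1}{Tr(v_i)}\mathcal{Q}(G)_{i,j}\,Tr(v_j)$, I get $R_i(\widetilde{\mathcal{Q}}) = \frac{1}{Tr(v_i)}\big(\mathcal{Q}(G)\mathbf{t}\big)_i$. Since the diagonal entry of $\mathcal{Q}(G)$ is $Tr(v_i)$ and its off-diagonal entries are the distances $d_{i,j}$, the $i$-th entry of $\mathcal{Q}(G)\mathbf{t}$ is $Tr(v_i)^{2} + \sum_{k\neq i} d_{i,k}Tr(v_k) = Tr(v_i)^{2} + \mathcal{T}_i$, where I have used $d_{i,i}=0$ to extend the sum defining $\mathcal{T}_i$. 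Dividing by $Tr(v_i)$ yields $R_i(\widetilde{\mathcal{Q}}) = Tr(v_i) + \frac{\mathcal{T}_i}{Tr(v_i)}$, exactly the quantity appearing in the bounds.

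With this identity in hand, the conclusion is immediate from Theorem \ref{fr}: applying $(\ref{row})$ to the nonnegative matrix $\widetilde{\mathcal{Q}}$ gives $\min_i R_i(\widetilde{\mathcal{Q}}) \le \partial_1^{Q}(G) \le \max_i R_i(\widetilde{\mathcal{Q}})$, which are precisely $(\ref{i3})$ and $(\ref{i4})$. For the equality characterization I would invoke the irreducibility clause of Theorem \ref{fr}: because $\widetilde{\mathcal{Q}}$ is positive and hence irreducible, equality on either side (and hence both) holds if and only if all its row sums coincide, that is, if and only if $Tr(v_i) + \frac{\mathcal{T}_i}{Tr(v_i)}$ takes the same value for every $i$.

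I do not expect a genuine obstacle here; the whole argument rests on the single observation that $\mathbf{t}$ is a natural scaling vector making the transformed row sums collapse to the desired expression. The only points requiring care are the invertibility of $D$ (guaranteed by connectedness, since each $Tr(v_i)>0$) and correctly accounting for the diagonal term of $\mathcal{Q}(G)$ when reassembling $Tr(v_i)^{2} + \mathcal{T}_i$, so that the $Tr(v_i)$ summand in the bound comes from the diagonal and the $\mathcal{T}_i/Tr(v_i)$ summand from the off-diagonal distances.
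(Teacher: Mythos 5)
Your argument is correct, but note that there is nothing in this paper to compare it against: Theorem \ref{R1} is stated as a quoted result from \cite{Hong} (Theorem 3.8) and the paper supplies no proof of it, so your proposal should be judged on its own and against the toolkit the paper does develop. On its own terms it is sound. The row-sum computation is right: with $D=\mathrm{diag}[Tr(v_1),\ldots,Tr(v_n)]$ (invertible since $G$ is connected on $n\geq 2$ vertices, so every $Tr(v_i)>0$), the matrix $\widetilde{\mathcal{Q}}=D^{-1}\mathcal{Q}(G)D$ has $(i,j)$ entry $\frac{Tr(v_j)}{Tr(v_i)}\mathcal{Q}(G)_{i,j}$, and since the diagonal of $\mathcal{Q}(G)$ is $Tr(v_i)$ while $d_{i,i}=0$ in the definition of $\mathcal{T}_i$, one indeed gets $R_i(\widetilde{\mathcal{Q}})=Tr(v_i)+\frac{\mathcal{T}_i}{Tr(v_i)}$. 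Since $\widetilde{\mathcal{Q}}$ is a positive matrix (so irreducible) with the same spectrum as $\mathcal{Q}(G)$, Theorem \ref{fr} immediately yields \eqref{i3} and \eqref{i4}, and its irreducibility clause gives the full equality characterization in both directions, exactly as stated.

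It is worth contrasting your route with the machinery the paper itself uses for its new signless Laplacian bounds. Corollary \ref{cfr} is precisely Theorem \ref{fr} applied to $\mathcal{Q}(G)$ unscaled, and your diagonal similarity is the classical refinement of that: you change basis so that the crude row sums $2\,Tr(v_i)$ are replaced by the transmission-weighted averages $Tr(v_i)+\mathcal{T}_i/Tr(v_i)$, which always produce a tighter sandwich. The paper's own engine (Lemma \ref{L2}, used to prove Theorem \ref{T2}) works differently: it expresses $p(\partial_1^{Q}(G))$ as a convex combination $\sum_i x_i R_i(p(\mathcal{Q}(G)))$ using the Perron vector $\mathbf{x}$, which handles polynomial functions of $\mathcal{Q}(G)$ but, as stated there, only yields strictness when the row sums differ rather than a clean if-and-only-if. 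Your similarity argument, restricted as it is to $\mathcal{Q}(G)$ itself, buys the sharper equality statement directly from Theorem \ref{fr}; conversely, the Lemma \ref{L2} approach generalizes to $p(\mathcal{Q}(G))$, which the scaling trick does not do as transparently. In short: correct proof, standard and appropriate technique, genuinely different from (and complementary to) the polynomial row-sum method the paper employs for its own results.
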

\begin{theorem}\label{R2}
(\cite{Hong}, Theorem 3.9) Let $G$ be a simple connected graph with $n$ vertices such that $Tr(v_{1})\geq Tr(v_{2})\geq \cdots \geq Tr(v_{n})$. Then,
\begin{align}\label{i5}
\min\limits_{1\leq i \leq n}\bigg\{\sqrt{2\mathcal{T}_{i}+2(Tr(v_{i}))^{2}}\bigg\} \ &\leq \ \partial^{Q}_{1}(G)
\end{align}
\begin{align}\label{i6} \partial^{Q}_{1}(G) \ &\leq \ \max\limits_{1\leq i \leq n}\bigg\{\sqrt{2\mathcal{T}_{i}+2(Tr(v_{i}))^{2}}\bigg\} \ .
\end{align}
\end{theorem}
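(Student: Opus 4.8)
The plan is to bound $\partial_1^{Q}(G)=\rho(\mathcal{Q}(G))$ by passing to the square $\mathcal{Q}(G)^2$ and applying the row-sum estimates of Theorem \ref{fr}. The motivation is that the quantity $2\mathcal{T}_i+2(Tr(v_i))^2$ appearing in \eqref{i5} and \eqref{i6} is, up to the identification below, exactly a row sum of $\mathcal{Q}(G)^2$; so once that is verified, the bounds follow immediately from the elementary fact that the spectral radius of a nonnegative matrix lies between its smallest and largest row sums.

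First I would record that every row sum of $\mathcal{Q}(G)$ equals twice the transmission of the corresponding vertex. Writing $\mathcal{Q}(G)=(q_{i,k})$ with diagonal entries $q_{i,i}=Tr(v_i)$ and off-diagonal entries $q_{i,k}=d_{i,k}$ for $k\neq i$, and using $\sum_k d_{i,k}=Tr(v_i)$, we get $R_i(\mathcal{Q}(G))=Tr(v_i)+\sum_{k\neq i}d_{i,k}=2Tr(v_i)$. Next I would compute the row sums of $\mathcal{Q}(G)^2$ by summing over the intermediate index,
\[
R_i\big(\mathcal{Q}(G)^2\big)=\sum_{k}q_{i,k}\,R_k(\mathcal{Q}(G))=2\sum_{k}q_{i,k}\,Tr(v_k)=2(Tr(v_i))^2+2\sum_{k\neq i}d_{i,k}\,Tr(v_k).
\]
Since $d_{i,i}=0$, the last sum extends harmlessly over all $k$ and equals $\mathcal{T}_i=\sum_{k}d_{i,k}Tr(v_k)$, so that $R_i(\mathcal{Q}(G)^2)=2\mathcal{T}_i+2(Tr(v_i))^2$.

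To conclude, I would note that $\mathcal{Q}(G)$ is a symmetric nonnegative matrix whose eigenvalues $\partial_1^{Q}(G)\geq\cdots\geq\partial_n^{Q}(G)\geq 0$ are nonnegative, so the nonnegative matrix $\mathcal{Q}(G)^2$ has spectral radius $\rho(\mathcal{Q}(G)^2)=(\partial_1^{Q}(G))^2$ (the largest eigenvalue $\partial_1^{Q}(G)$ being nonnegative by Theorem \ref{PF}). Applying the row-sum bounds of Theorem \ref{fr} to $\mathcal{Q}(G)^2$ then gives $\min_i\big(2\mathcal{T}_i+2(Tr(v_i))^2\big)\leq(\partial_1^{Q}(G))^2\leq\max_i\big(2\mathcal{T}_i+2(Tr(v_i))^2\big)$, and taking the monotone square root of each side yields \eqref{i5} and \eqref{i6}. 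The argument is entirely routine; the only points requiring care are the identity $\rho(\mathcal{Q}(G)^2)=(\partial_1^{Q}(G))^2$, which relies on $\partial_1^{Q}(G)\geq 0$, and the bookkeeping that separates the diagonal contribution $2(Tr(v_i))^2$ from the off-diagonal terms that reassemble into $\mathcal{T}_i$ precisely because $d_{i,i}=0$.
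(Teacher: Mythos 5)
Your proof is correct and follows essentially the same route the paper's own machinery takes: Theorem \ref{R2} is quoted from \cite{Hong} without proof, but your row-sum computation $R_i(\mathcal{Q}^{2}(G))=2(Tr(v_i))^{2}+2\mathcal{T}_i$ is exactly the identity the paper derives in (\ref{E4}) within the proof of Theorem \ref{T2}, and your concluding step is Lemma \ref{L2} (equivalently, Theorem \ref{fr} applied to the positive matrix $\mathcal{Q}^{2}(G)$) with $p(x)=x^{2}$, followed by the monotone square root justified by $\partial_1^{Q}(G)\geq 0$. The argument is complete as written.
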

\begin{theorem}\label{R3}
(\cite{You}, Theorem 3.7) Let $G=(V,E)$ be a connected graph on $n$ vertices. Then
\begin{align}\label{i2}
\partial^{Q}_{1}(G) \ \leq \ \max\limits_{1\leq i \leq n}\bigg\{Tr(v_{i})+\sqrt{(n-1)\sum\limits_{k=1}^{n}d^{2}_{k,i}}\bigg\} \ .
\end{align}
Moreover, if the equality in (\ref{i2}) holds, then \ $Tr(v_{i})+\sqrt{(n-1)\sum\limits_{k=1}^{n}d^{2}_{k,i}} \ = \ Tr(v_{j})+\sqrt{(n-1)\sum\limits_{k=1}^{n}d^{2}_{k,j}}$ \ for any $i,j\in \{1,\ldots,n\}$.
\end{theorem}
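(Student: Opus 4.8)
The plan is to exploit that $\mathcal{Q}(G)=Tr(G)+\mathcal{D}(G)$ is a positive matrix, so Theorem \ref{PF} gives $\partial_1^{Q}(G)=\rho(\mathcal{Q}(G))$ together with a strictly positive eigenvector $\mathbf{x}=[x_1,\ldots,x_n]^T$. First I would pick an index $p$ with $x_p=\max_{1\le k\le n}x_k>0$ and read off the $p$-th coordinate of $\mathcal{Q}(G)\mathbf{x}=\partial_1^{Q}(G)\mathbf{x}$; using $d_{p,p}=0$ this becomes $(\partial_1^{Q}(G)-Tr(v_p))x_p=\sum_{k\neq p}d_{p,k}x_k$.

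The core estimate is a single application of Cauchy--Schwarz, $\sum_{k\neq p}d_{p,k}x_k\le\sqrt{\sum_{k\neq p}d_{p,k}^2}\,\sqrt{\sum_{k\neq p}x_k^2}$, followed by $\sum_{k\neq p}x_k^2\le(n-1)x_p^2$, which holds because each of the $n-1$ summands satisfies $x_k\le x_p$. Since $d_{p,p}=0$ lets me write $\sum_{k\neq p}d_{p,k}^2=\sum_{k=1}^n d_{k,p}^2$, dividing through by $x_p>0$ yields $\partial_1^{Q}(G)\le Tr(v_p)+\sqrt{(n-1)\sum_{k=1}^n d_{k,p}^2}$, and bounding the right-hand side by its maximum over all indices delivers (\ref{i2}).

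For the equality clause I would trace the two inequalities backwards. Equality forces $\sum_{k\neq p}x_k^2=(n-1)x_p^2$, which (since $x_k\le x_p$) means $x_k=x_p$ for every $k$, so $\mathbf{x}$ is a scalar multiple of $\mathbf{1}$; feeding this back into $\mathcal{Q}(G)\mathbf{x}=\partial_1^{Q}(G)\mathbf{x}$ shows every row sum $2\,Tr(v_i)$ equals $\partial_1^{Q}(G)$, i.e. $G$ is transmission regular with common transmission $k=\partial_1^{Q}(G)/2$. The remaining point is to promote ``the maximizing value equals $\partial_1^{Q}(G)$'' to ``all the values coincide''. Here I would use the reverse estimate $\sum_{k\neq i}d_{i,k}^2\ge\frac{1}{n-1}\big(\sum_{k\neq i}d_{i,k}\big)^2=\frac{Tr(v_i)^2}{n-1}=\frac{k^2}{n-1}$, valid for each $i$ because there are $n-1$ summands; this gives $Tr(v_i)+\sqrt{(n-1)\sum_{k}d_{k,i}^2}\ge 2k=\partial_1^{Q}(G)$ for every $i$, while the already-established upper bound forces the same quantity to be $\le\partial_1^{Q}(G)$. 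Hence all these expressions equal $\partial_1^{Q}(G)$ and therefore coincide.

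I expect the main obstacle to be the equality analysis rather than the bound itself: one must be careful that the eigenvector coordinates are genuinely positive (so the division by $x_p$ is legitimate and the coordinate bound is tight only when $\mathbf{x}$ is constant) and that the reverse Cauchy--Schwarz step becomes available precisely because transmission regularity pins $\sum_{k\neq i}d_{i,k}=Tr(v_i)=k$. The upper bound (\ref{i2}) itself is routine once the Perron eigenvector of $\mathcal{Q}(G)$ is in hand.
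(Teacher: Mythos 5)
Your proof is correct, but there is nothing internal to compare it against: the paper states Theorem \ref{R3} without proof, quoting it from \cite{You}. Your argument --- take the Perron eigenvector $\mathbf{x}>0$ of the positive matrix $\mathcal{Q}(G)$ (Theorem \ref{PF}), read off the row at a maximal coordinate $x_p$, then apply Cauchy--Schwarz together with $x_k\le x_p$ --- is the standard ``maximal entry'' proof of such bounds and is sound; it is close in spirit to the eigenvector/row-sum technique the paper does use elsewhere (Lemma \ref{L2} and Theorem \ref{T2}), except that you exploit a single row of $\mathcal{Q}(G)$ itself rather than extreme row sums of a polynomial in $\mathcal{Q}(G)$. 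Your equality analysis actually proves more than the stated necessary condition: collapsing the chain at the index $p$ is legitimate because $\sum_{k\neq p}d_{p,k}^{2}>0$ for a connected graph on $n\ge 2$ vertices, so both intermediate inequalities are individually tight; the tightness of $\sum_{k\neq p}x_k^{2}\le (n-1)x_p^{2}$ forces $\mathbf{x}$ constant, hence $\mathcal{Q}(G)\mathbf{1}=\partial_1^{Q}(G)\mathbf{1}$ and $G$ is transmission regular with $2\,Tr(v_i)=\partial_1^{Q}(G)=2k$; and the reverse estimate $\sum_{k\neq i}d_{i,k}^{2}\ge \big(\sum_{k\neq i}d_{i,k}\big)^{2}/(n-1)=k^{2}/(n-1)$ then pins every bracketed quantity to exactly $2k$, which yields (and strengthens) the claim that they all coincide. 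Two small phrasing points worth fixing: each quantity $Tr(v_i)+\sqrt{(n-1)\sum_{k}d_{k,i}^{2}}$ is $\le \partial_1^{Q}(G)$ not ``by the already-established upper bound'' but because it is trivially at most the maximum, which equals $\partial_1^{Q}(G)$ under the equality hypothesis; and you silently use the symmetry $d_{k,i}=d_{i,k}$ and $d_{i,i}=0$ to identify $\sum_{k\neq i}d_{i,k}^{2}$ with $\sum_{k=1}^{n}d_{k,i}^{2}$, both harmless here but worth stating.
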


The following lemma is proved in \cite{Li} and \cite{Liu}.

\begin{lemma} \label{LL}
Let $G$ be a connected graph and let $p(x)$ be a polynomial on $x$. Let $q_{1}(G)$ the largest signless Laplacian eigenvalue of the matrix $Q(G)=D(G)+A(G)$. Then
\begin{equation}\label{E1}
\min_{1 \leq i \leq n}\{R_{i}(p(Q(G)))\} \ \leq \ p(q_{1}(Q(G))) \ \leq \ \max_{1 \leq i \leq n}\{R_{i}(p(Q(G)))\}.
\end{equation}
Moreover, if the row sums of $p(Q(G))$ are not all equal, then both inequalities in (\ref{E1}) are strict.
\end{lemma}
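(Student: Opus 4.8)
The plan is to exploit the fact that $Q(G)=D(G)+A(G)$ is a nonnegative symmetric matrix which, because $G$ is connected, is irreducible. By the Perron--Frobenius Theorem (Theorem \ref{PF}), its spectral radius $q_1(Q(G))$ is then a simple eigenvalue admitting a strictly positive eigenvector $\mathbf{x}=[x_1,\ldots,x_n]^{T}$ with $x_i>0$ for all $i$. Setting $B=p(Q(G))$, the key structural observation is that $B$ is a polynomial in $Q(G)$, hence $B$ and $Q(G)$ commute and share eigenvectors; in particular
\begin{equation*}
B\mathbf{x}=p(Q(G))\,\mathbf{x}=p(q_1(Q(G)))\,\mathbf{x}.
\end{equation*}
Thus $p(q_1(Q(G)))$ is precisely the eigenvalue of $B$ associated with the positive vector $\mathbf{x}$.

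The main idea is to express $p(q_1(Q(G)))$ as a weighted average of the row sums $R_i(B)$ with strictly positive weights. Since $Q(G)$ is symmetric, so is $B$, and therefore
\begin{equation*}
\sum_{i=1}^{n} x_i R_i(B)=\mathbf{x}^{T}B\mathbf{1}=(B\mathbf{x})^{T}\mathbf{1}=p(q_1(Q(G)))\,\mathbf{x}^{T}\mathbf{1}=p(q_1(Q(G)))\sum_{i=1}^{n} x_i,
\end{equation*}
where I would use $R_i(B)=(B\mathbf{1})_i$ and $B=B^{T}$. Dividing by $\sum_i x_i>0$ gives
\begin{equation*}
p(q_1(Q(G)))=\frac{\sum_{i=1}^{n} x_i R_i(B)}{\sum_{i=1}^{n} x_i},
\end{equation*}
which exhibits $p(q_1(Q(G)))$ as a convex combination of the numbers $R_1(B),\ldots,R_n(B)$ with positive coefficients $x_i/\sum_j x_j$.

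A convex combination with strictly positive weights always lies between the smallest and the largest of the averaged quantities, which yields (\ref{E1}) at once. Moreover, such an average attains the maximum (respectively the minimum) if and only if every $R_i(B)$ equals that extreme value, that is, if and only if all row sums of $B=p(Q(G))$ coincide; since all weights $x_i$ are strictly positive, this is the only way equality can occur. Consequently, when the row sums of $p(Q(G))$ are not all equal, both inequalities in (\ref{E1}) are strict, as claimed.

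The step I expect to require the most care is the observation that $B=p(Q(G))$ need not be a nonnegative matrix, so Theorem \ref{fr} cannot be invoked for $B$ directly; the correct substitute is the positivity of the Perron eigenvector of $Q(G)$ combined with the symmetry of $B$. It is exactly the strict positivity of $\mathbf{x}$, guaranteed by the irreducibility coming from connectedness, that makes the weights positive and thereby delivers both the two-sided bound and its sharp equality characterization.
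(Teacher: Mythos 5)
Your proof is correct and follows essentially the same route as the paper's proof of the analogous Lemma \ref{L2} (the paper itself does not prove Lemma \ref{LL}, citing it to \cite{Li} and \cite{Liu}): take a positive eigenvector $\mathbf{x}$ for the spectral radius, normalize, and write $p(q_1(Q(G)))$ as a weighted average $\sum_{i} x_i R_i(p(Q(G)))$ of the row sums, with strictness coming from the positivity of the weights. Your explicit appeal to irreducibility of $Q(G)$ (the paper gets positivity of $\mathbf{x}$ for free since $\mathcal{Q}(G)$ is a positive matrix) and to the symmetry of $p(Q(G))$ in equating $\mathbf{1}^{T}p(Q(G))\mathbf{x}$ with $\mathbf{x}^{T}p(Q(G))\mathbf{1}$ only makes precise steps the paper leaves implicit.
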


The following two results are neccesary for the proof of Theorem \ref{T2} (below). In the next lemma, we extend Lemma \ref{LL} to the distance signless Laplacian matrix.
\begin{lemma}\label {L2}
Let $G$ be a connected graph and let $p(x)$ be a polynomial on $x$. Then
\begin{equation}\label{E2}
\min_{1 \leq i \leq n}\{R_{i}(p(\mathcal{Q}(G)))\}  \leq \ p(\partial^{Q}_{1}(G)) \ \leq \ \max_{1 \leq i \leq n}\{R_{i}(p(\mathcal{Q}(G)))\}.
\end{equation}
Moreover, if the row sums of $p(\mathcal{Q}(G))$ are not all equal, then both inequalities in (\ref{E2}) are strict.
\end{lemma}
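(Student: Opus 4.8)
The plan is to mimic the argument behind Lemma~\ref{LL}, but to replace the appeal to nonnegativity of $p(\mathcal{Q}(G))$ (which may fail, since a polynomial in a nonnegative matrix need not be nonnegative) by an argument that exploits the \emph{symmetry} of $\mathcal{Q}(G)$ together with the positivity of its Perron eigenvector. First I would record that $\mathcal{Q}(G)$ is a positive symmetric matrix, so by Theorem~\ref{PF} its spectral radius $\partial_1^{Q}(G)$ is a simple eigenvalue with a positive eigenvector $\mathbf{x}=[x_1,\ldots,x_n]^T$, i.e. $x_i>0$ for all $i$. Since $p$ is a polynomial, $M:=p(\mathcal{Q}(G))$ is again symmetric and $\mathbf{x}$ is an eigenvector of $M$ with eigenvalue $\mu:=p(\partial_1^{Q}(G))$.

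The key step is to obtain an exact representation of $\mu$ as a weighted average of the row sums. Writing $R_i = R_i(M)$ so that $M\mathbf{1}=[R_1,\ldots,R_n]^T$, I would compute $\mathbf{x}^T M \mathbf{1}$ in two ways. On one hand $\mathbf{x}^T M \mathbf{1} = \mathbf{x}^T(M\mathbf{1}) = \sum_{i=1}^n x_i R_i$. On the other hand, using $M^T=M$ together with $M\mathbf{x}=\mu\mathbf{x}$,
\begin{equation*}
\mathbf{x}^T M \mathbf{1} = (M\mathbf{x})^T\mathbf{1} = \mu\,\mathbf{x}^T\mathbf{1} = \mu\sum_{i=1}^n x_i .
\end{equation*}
Equating the two expressions and using $\sum_i x_i>0$ yields
\begin{equation*}
\mu = \frac{\sum_{i=1}^n x_i R_i}{\sum_{i=1}^n x_i},
\end{equation*}
so $\mu$ is a convex combination of $R_1,\ldots,R_n$ with strictly positive weights $x_i/\sum_k x_k$.

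From here the conclusion is immediate. A weighted average with positive weights lies between the smallest and the largest of the averaged quantities, which gives $\min_{1\le i\le n} R_i \le \mu \le \max_{1\le i\le n} R_i$; this is exactly (\ref{E2}). Moreover, if the row sums of $M=p(\mathcal{Q}(G))$ are not all equal, then $\min_i R_i < \max_i R_i$, and since every weight $x_i/\sum_k x_k$ is strictly positive the convex combination cannot reach either extreme, so both inequalities are strict, as claimed.

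I expect the only genuinely delicate point to be the justification that one must \emph{not} simply apply the nonnegative–matrix row-sum bound (Theorem~\ref{fr}) to $p(\mathcal{Q}(G))$: that matrix need not be nonnegative, and $p(\partial_1^{Q}(G))$ need not be its spectral radius (for example when $p$ is decreasing near $\partial_1^{Q}(G)$, the value $p(\partial_1^{Q}(G))$ could be the smallest eigenvalue of $p(\mathcal{Q}(G))$). The symmetry of $\mathcal{Q}(G)$, and hence of $p(\mathcal{Q}(G))$, is precisely what rescues the statement, since it turns the eigenvalue equation $M\mathbf{x}=\mu\mathbf{x}$ into the weighted-average identity above; this is the step where the positivity of $\mathbf{x}$ supplied by Theorem~\ref{PF} is indispensable.
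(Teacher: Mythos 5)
Your proposal is correct and is essentially the paper's own argument: the paper likewise takes the positive Perron eigenvector $\mathbf{x}$ of $\mathcal{Q}(G)$, normalizes $\sum_i x_i = 1$, and derives the identity $p(\partial_1^{Q}(G)) = \sum_i x_i R_i(p(\mathcal{Q}(G)))$, concluding by the same positive-weights averaging argument. The only difference is cosmetic — you make explicit the use of symmetry of $p(\mathcal{Q}(G))$ (needed to convert column sums into row sums in the bilinear form $\mathbf{x}^T p(\mathcal{Q}(G))\mathbf{1}$), a point the paper's proof uses silently.
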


\begin{proof} \ \ $\mathcal{Q}(G)$ is a positive matrix. Then there exists a positive vector $\textbf{x}=[x_1,\ldots,x_n]^{T}$ such that $\mathcal{Q}(G) \textbf{x}=\partial_1^{Q}(G)\textbf{x}$. Then
\begin{equation*}
p(\mathcal{Q}(G))\textbf{x}=p(\partial_1^{Q}(G))\textbf{x}.
\end{equation*}
We may assume $\sum_{i=1}^{n}x_i=1$. Hence
\begin{equation*}
p(\partial_1^{Q}(G)) = p(\partial_1^{Q}(G))\sum_{i=1}^{n}x_i=\sum_{i=1}^{n} p(\partial_1^{Q}(G))x_i
\end{equation*}
\begin{equation*}
=\sum_{i=1}^{n}(p(\mathcal{Q}(G))\textbf{x})_i=\sum_{i=1}^{n}x_iR_i(p(\mathcal{Q}(G))).
\end{equation*}
Since the entries of $\textbf{x}$ are positive and their is equal to $1$, we have
\begin{equation*}
\min_{1\leq i\leq n}\{R_i(p(\mathcal{Q}(G)))\} \leq \sum_{i=1}^{n}x_iR_i(p(\mathcal{Q}(G)))\leq \max_{1\leq i\leq n}\{R_i(p(\mathcal{Q}(G)))\},
\end{equation*}
and thus the result follows.
\end{proof}\ \\
\begin{theorem}
\label{T1}
Let $G$ be a connected graph on $n$ vertices. Let $\emph{T}$ and $\emph{t}$ be the maximum and the minimum transmissions of $G$, respectively. Then, for any $u\in V(G)$,
\begin{equation*}
2W(G)+(\emph{t}-1)Tr(u)-(n-1)\emph{t} \leq \sum_{v\neq u}d(u,v)Tr(v)
\end{equation*}
\begin{equation*}
   \leq \ 2W(G)+(\emph{T}-1)Tr(u)-(n-1)\emph{T}.
\end{equation*}
\end{theorem}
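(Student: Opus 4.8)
The plan is to prove both inequalities by the same elementary strategy: write the central quantity $S(u) := \sum_{v \neq u} d(u,v)\,Tr(v)$ (which is just the second distance degree $\mathcal{T}_i$ when $u=v_i$), compare each transmission $Tr(v)$ to the relevant extreme value (to $T$ for the upper bound, to $t$ for the lower bound), and then exploit that $d(u,v)\geq 1$ for every $v\neq u$ together with two standard identities.

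First I would record the three facts needed. For a fixed vertex $u$, the definition of transmission gives $\sum_{v\neq u} d(u,v)=Tr(u)$. Summing all transmissions gives $\sum_{v\in V(G)} Tr(v)=2W(G)$, hence $\sum_{v\neq u} Tr(v)=2W(G)-Tr(u)$. Finally, since $G$ is connected and the vertices are distinct, $d(u,v)\geq 1$ whenever $v\neq u$.

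For the upper bound I would write $Tr(v)=T-(T-Tr(v))$ and split
\[
S(u)=T\sum_{v\neq u} d(u,v)-\sum_{v\neq u} d(u,v)\bigl(T-Tr(v)\bigr)
      =T\,Tr(u)-\sum_{v\neq u} d(u,v)\bigl(T-Tr(v)\bigr).
\]
Since each factor $T-Tr(v)\geq 0$ and $d(u,v)\geq 1$, the subtracted sum is bounded below by $\sum_{v\neq u}(T-Tr(v))=(n-1)T-\bigl(2W(G)-Tr(u)\bigr)$, and substituting this lower estimate yields exactly $S(u)\leq 2W(G)+(T-1)Tr(u)-(n-1)T$.

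The lower bound is the mirror image: write $Tr(v)=t+(Tr(v)-t)$, so that $S(u)=t\,Tr(u)+\sum_{v\neq u} d(u,v)\bigl(Tr(v)-t\bigr)$, and now $d(u,v)\bigl(Tr(v)-t\bigr)\geq Tr(v)-t\geq 0$ gives $\sum_{v\neq u} d(u,v)\bigl(Tr(v)-t\bigr)\geq \bigl(2W(G)-Tr(u)\bigr)-(n-1)t$, which delivers the stated lower bound after collecting terms. There is no serious obstacle here; the only point requiring care is choosing the correct additive decomposition of $Tr(v)$ so that the single inequality $d(u,v)\geq 1$ pushes the nonnegative remainder in the direction appropriate to each of the two bounds.
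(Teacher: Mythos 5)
Your proof is correct and is essentially the paper's argument in dual form: the paper splits the distance as $d(u,v)=1+(d(u,v)-1)$ and bounds $Tr(v)$ by $t$ or $T$ in the remainder sum, while you split $Tr(v)$ around the extreme value and use $d(u,v)\geq 1$; both reduce to summing the same termwise inequality $(d(u,v)-1)(T-Tr(v))\geq 0$, respectively $(d(u,v)-1)(Tr(v)-t)\geq 0$, over $v\neq u$ and invoking the identities $\sum_{v\neq u}d(u,v)=Tr(u)$ and $\sum_{v\in V(G)}Tr(v)=2W(G)$. There are no gaps.
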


\begin{proof}
\begin{align}
\sum\limits_{v\neq u}d(u,v)Tr(v) \ \notag &= \ \sum\limits_{v\neq u}Tr(v)+\sum\limits_{v\neq u}(d(u,v)-1)Tr(v) \\ \notag \\ \label{E3}&= \ 2W(G)-Tr(u)+\sum\limits_{v\neq u}(d(u,v)-1)Tr(v).
\end{align}
Using (\ref{E3}), we obtain
\begin{align*}
\sum\limits_{v\neq u}d(u,v)Tr(v) \ &\geq \ 2W(G)-Tr(u)+\emph{t}\sum\limits_{v\neq u}(d(u,v)-1) \\ \\ &= \ 2W(G)-Tr(u)+\emph{t}\bigg(\sum\limits_{v\neq u}d(u,v)-(n-1)\bigg) \\ \\ &= \ 2W(G)-Tr(u)+\emph{t}(Tr(u)-(n-1)) \\ \\ &= \ 2W(G)+(\emph{t}-1)Tr(u)-(n-1)\emph{t}.
\end{align*}
Also, using (\ref{E3}), we get
\begin{align*}
\sum\limits_{v\neq u}d(u,v)Tr(v) \ &\leq \ 2W(G)-Tr(u)+\emph{T}\sum\limits_{v\neq u}(d(u,v)-1) \\ \\ &= \ 2W(G)-Tr(u)+\emph{T}\bigg(\sum\limits_{v\neq u}d(u,v)-(n-1)\bigg) \\ \\ &= \ 2W(G)-Tr(u)+\emph{T}(Tr(u)-(n-1)) \\ \\ &= \ 2W(G)+(\emph{T}-1)Tr(u)-(n-1)\emph{T}.
\end{align*}
This completes the proof.
\end{proof}

Let $\textbf{e}_i$ be the $n-$ dimensional vector of zeros except for the $i-$ entry equal to $1$.

\begin{theorem}
\label{T2} Let $G$ be a connected graph on $n$ vertices. Let $\emph{T}$ and $\emph{t}$ the maximum and the minimum transmissions of $G$, respectively. Then
\begin{align}\label{CI5}
\frac{\emph{t}-1+\sqrt{(\emph{t}-1)^{2}+8(\emph{t}^{2}+2W(G)-(n-1)\emph{t})}}{2} \ \leq \ \partial^{Q}_{1}(G) \
\end{align}
and
\begin{align}\label{CS6}
\partial^{Q}_{1}(G) \ \leq \ \frac{\emph{T}-1+\sqrt{(\emph{T}-1)^{2}+8(\emph{T}^{2}+2W(G)-(n-1)\emph{T})}}{2} \ .
\end{align}
\end{theorem}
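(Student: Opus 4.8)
The plan is to apply Lemma \ref{L2} to a suitable quadratic polynomial $p(x)$ and to control the resulting row sums with Theorem \ref{T1}. For the upper bound I would take $p(x) = x^2 - (T-1)x$, and for the lower bound $p(x) = x^2 - (t-1)x$; the $-(T-1)x$ (resp. $-(t-1)x$) term is engineered precisely so that it cancels the cross term produced by Theorem \ref{T1}.

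First I would record the row sums of $\mathcal{Q}(G)$ and $\mathcal{Q}(G)^2$. Since the $i$-th row of $\mathcal{Q}(G)$ has diagonal entry $Tr(v_i)$ and off-diagonal entries $d_{i,k}$, we get $R_i(\mathcal{Q}(G)) = Tr(v_i) + \sum_{k\neq i}d_{i,k} = 2Tr(v_i)$. Writing $R_i(\mathcal{Q}(G)^2) = \sum_k \mathcal{Q}(G)_{i,k} R_k(\mathcal{Q}(G))$ then yields $R_i(\mathcal{Q}(G)^2) = 2Tr(v_i)^2 + 2\sum_{k\neq i}d_{i,k}Tr(v_k)$, where the sum $\sum_{k\neq i}d_{i,k}Tr(v_k)$ is exactly the quantity estimated in Theorem \ref{T1}.

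For the upper bound, the row sum is $R_i(p(\mathcal{Q}(G))) = 2Tr(v_i)^2 + 2\sum_{k\neq i}d_{i,k}Tr(v_k) - 2(T-1)Tr(v_i)$. Inserting the upper estimate $\sum_{k\neq i}d_{i,k}Tr(v_k)\leq 2W(G)+(T-1)Tr(v_i)-(n-1)T$ from Theorem \ref{T1}, the $(T-1)Tr(v_i)$ contributions cancel and leave $R_i(p(\mathcal{Q}(G))) \leq 2Tr(v_i)^2 + 4W(G) - 2(n-1)T$; bounding $Tr(v_i)\leq T$ then gives the uniform estimate $2(T^2 + 2W(G) - (n-1)T)$. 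By Lemma \ref{L2}, $p(\partial_1^{Q}(G)) \leq \max_i R_i(p(\mathcal{Q}(G)))$, so $\partial_1^{Q}(G)$ satisfies $x^2 - (T-1)x - 2(T^2+2W(G)-(n-1)T) \leq 0$, and solving this quadratic inequality for its upper root delivers \eqref{CS6}. The lower bound is handled symmetrically with $p(x)=x^2-(t-1)x$, the lower estimate of Theorem \ref{T1}, and $Tr(v_i)\geq t$, producing $p(\partial_1^{Q}(G)) \geq 2(t^2+2W(G)-(n-1)t)$, i.e. $x^2-(t-1)x-2(t^2+2W(G)-(n-1)t)\geq 0$ at $x=\partial_1^{Q}(G)$.

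The main obstacle — really the only subtle point — is selecting the correct root in the lower bound: the inequality $\geq 0$ merely forces $\partial_1^{Q}(G)$ to lie outside the open interval between the two roots, so I must rule out the possibility that it lies below the smaller root. Here I would use $2W(G)=\sum_i Tr(v_i)\geq nt$, whence $t^2+2W(G)-(n-1)t \geq t^2+t>0$; thus the product of the roots is negative, the smaller root is negative, and since $\partial_1^{Q}(G)>0$ (it is the spectral radius of the positive matrix $\mathcal{Q}(G)$) it must exceed the positive larger root, which is exactly the right-hand side of \eqref{CI5}.
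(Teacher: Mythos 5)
Your proposal is correct and takes essentially the same route as the paper's own proof: the same polynomials $p(x)=x^{2}-(t-1)x$ and $p(x)=x^{2}-(T-1)x$, the same row-sum identity $R_{i}(\mathcal{Q}^{2}(G))=2Tr^{2}(v_{i})+2\sum_{k\neq i}d_{i,k}Tr(v_{k})$, Theorem \ref{T1} to estimate the cross term, and Lemma \ref{L2} to transfer the row-sum bounds to $p(\partial_{1}^{Q}(G))$. Your explicit root-selection argument for \eqref{CI5} (showing via $2W(G)\geq nt$ that the smaller root is negative, so the positive $\partial_{1}^{Q}(G)$ must exceed the larger root) is a detail the paper leaves implicit, and it is a correct and worthwhile addition.
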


\begin{proof} \ \ Since $\mathcal{Q}(G)=Tr(G)+\mathcal{D}(G)$, we have
\begin{equation*}
\mathcal{Q}^{2}(G) \ = \ Tr^{2}(G)+Tr(G)\mathcal{D}(G)+\mathcal{D}(G)Tr(G)+\mathcal{D}^{2}(G).
\end{equation*}
Therefore, the $i$-th row sum of $\mathcal{Q}^{2}(G)$ is
\begin{align}
R_{i}(\mathcal{Q}^{2}(G)) \ &= \textbf{e}^{T}_{i}\mathcal{Q}^{2}(G) \textbf{1} \notag \\ \notag \\ &= \ \textbf{e}^{T}_{i} Tr^{2}(G) \textbf{1}+\textbf{e}^{T}_{i}Tr(G)\mathcal{D}(G) \textbf{1}+\textbf{e}^{T}_{i} \mathcal{D}(G)Tr(G) \textbf{1}+\textbf{e}^{T}_{i} \mathcal{D}^{2}(G)\textbf{1} \notag \\ \notag \\ &= \ 2\textbf{e}^{T}_{i}Tr^{2}(G) \textbf{1}+2\textbf{e}^{T}_{i} \mathcal{D}^{2}(G)\textbf{1} \notag \\ \notag \\ \label{E4}&= \ 2Tr^{2}(v_i)+2\sum\limits_{v\neq v_i}d(v_i,v)Tr(v).
\end{align}
From Theorem \ref{T1} and (\ref{E4}), we have
\begin{equation}\label{E5}
2Tr^{2}(v_i)+2(2W(G)+(\emph{t}-1)Tr(v_i)-(n-1)\emph{t}) \ \leq \ R_{i}(\mathcal{Q}^{2}(G))
\end{equation}
and
\begin{equation}\label{E6}
R_{i}(\mathcal{Q}^{2}(G)) \ \leq \ 2Tr^{2}(v_i)+2(2W(G)+(\emph{T}-1)Tr(v_i)-(n-1)\emph{T}).
\end{equation}\ \\
Let $p(x)=x^{2}-(\emph{t}-1)x$. The $i$-th row sum of $p(\mathcal{Q}(G))$ is
\begin{align}
R_{i}(p(\mathcal{Q}(G))) \ &= \ R_{i}(\mathcal{Q}^{2}(G)-(\emph{t}-1)\mathcal{Q}(G)) \notag \\ \notag \\ &= \ R_{i}(\mathcal{Q}^{2}(G))-(\emph{t}-1)R_{i}(\mathcal{Q}(G)) \notag \\ \notag \\ &= \label{E7} \ R_{i}(\mathcal{Q}^{2}(G))-2(\emph{t}-1)Tr(v_i).
\end{align}
From (\ref{E5}) and (\ref{E7}), we obtain
\begin{align}\label{E8}
R_{i}(p(\mathcal{Q}(G))) \ &\geq \ 2Tr^{2}(v_i)+4W(G)-2(n-1)\emph{t}.
\end{align}
From (\ref{E8}), for $i=1,\ldots,n$, we have
\begin{align*}
R_{i}(p(\mathcal{Q}(G))) \ &\geq \ 2\emph{t}^{2}+4W(G)-2(n-1)\emph{t}.
\end{align*}
From this inequality and Lemma \ref{L2}, we get
\begin{align*}
2\emph{t}^{2}+4W(G)-2(n-1)\emph{t} \ \leq \ p(\partial^{Q}_{1}(G)) \ = \ (\partial^{Q}_{1}(G))^{2}-(\emph{t}-1)\partial^{Q}_{1}(G).
\end{align*}
This inequality allows to conclude the bound in (\ref{CI5}). Similarly, using the polynomial $p(x)=x^{2}-(\emph{T}-1)x$ and (\ref{E6}), the upper bound in (\ref{CS6}) can be obtained.
\end{proof}

\begin{remark} \ Since
\begin{equation*}
2\emph{t} \ \leq \ \frac{\emph{t}-1+\sqrt{(\emph{t}-1)^{2}+8(\emph{t}^{2}+2W(G)-(n-1)\emph{t})}}{2} \
\end{equation*}
and
\begin{equation*}
\frac{\emph{T}-1+\sqrt{(\emph{T}-1)^{2}+8(\emph{T}^{2}+2W(G)-(n-1)\emph{T})}}{2} \ \leq \ 2\emph{T},
\end{equation*}
Theorem \ref{T2} improves Corollary \ref{cfr}.
\end{remark}\ \\
We now recall the following result.

\begin{lemma}\cite{Aou2} \label{Aouc2}
A connected graph $G$ has only two distinct distance signless Laplacian eigenvalues if and only if $G$ is a complete graph.
\end{lemma}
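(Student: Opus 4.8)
The plan is to prove both implications, the forward one (two distinct distance signless Laplacian eigenvalues $\Rightarrow G=K_n$) being the substantial half. For the easy direction, if $G=K_n$ then every off-diagonal distance equals $1$, so $\mathcal{D}(K_n)=J-I$ (with $J$ the all-ones matrix) and every transmission equals $n-1$; hence $\mathcal{Q}(K_n)=(n-1)I+(J-I)=(n-2)I+J$, whose spectrum consists of $2n-2$ (simple) and $n-2$ (with multiplicity $n-1$), i.e. exactly two distinct values.

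For the forward direction, suppose $\mathcal{Q}(G)$ has exactly two distinct eigenvalues $\theta_1>\theta_2$. Since $\mathcal{Q}(G)$ is a positive matrix, Theorem \ref{PF} shows that $\theta_1=\rho(\mathcal{Q}(G))$ is simple with a positive unit eigenvector $\mathbf{u}=[u_1,\ldots,u_n]^T$, so $\theta_2$ has multiplicity $n-1$ and the spectral decomposition reads $\mathcal{Q}(G)=\theta_2 I+(\theta_1-\theta_2)\mathbf{u}\mathbf{u}^T$. Writing $c=\theta_1-\theta_2>0$ and reading off entries yields $d_{i,j}=c\,u_i u_j$ for $i\neq j$, while on the diagonal $\emph{Tr}(v_i)=\theta_2+c\,u_i^2$. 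But $\emph{Tr}(v_i)$ is also the $i$-th row sum of $\mathcal{D}(G)$, so $\emph{Tr}(v_i)=\sum_{j\neq i}c\,u_i u_j=c\,u_i(S-u_i)$ with $S=\sum_k u_k$. Equating the two expressions gives, for every $i$, the single quadratic $2c\,u_i^2-cS\,u_i+\theta_2=0$ independent of $i$; hence the entries of $\mathbf{u}$ take at most two distinct values.

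It then remains to show the $u_i$ are in fact all equal. I would suppose two distinct values $\alpha>\beta>0$ occur and exploit that each $d_{i,j}=c\,u_iu_j$ is a positive integer together with the fact that in a connected graph every vertex has a neighbour, i.e. $\min_{j\neq i}d_{i,j}=1$. For a vertex with value $\alpha$ this forces $c\alpha\beta=1$; if $\beta$ is attained at least twice, a $\beta$-vertex forces $c\beta^2=1$, whence $\alpha=\beta$, a contradiction. The remaining case ($\beta$ attained exactly once) is settled by combining $c\alpha\beta=1$ with the triangle inequality through the unique $\beta$-vertex (a common neighbour of all $\alpha$-vertices), which gives $c\alpha^2=2$, and then feeding $\alpha=2\beta$ into the trace identity $\alpha+\beta=S/2$ to obtain a non-integral value of $n$. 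Once all $u_i$ are equal, $d_{i,j}=c\,u_iu_j$ is one fixed positive integer for every pair, and connectivity forces this constant to be $1$, so every pair of vertices is adjacent and $G=K_n$. I expect the main obstacle to be precisely this last elimination of the two-value case, since it relies on the interplay of integrality, connectivity and the triangle inequality rather than on a single clean identity.
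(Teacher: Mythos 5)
Your proof is correct, but note that there is nothing in the paper to compare it against: the paper quotes this lemma from \cite{Aou2} without proof, so your argument is a genuinely self-contained re-derivation. Your route is sound throughout: $\mathcal{Q}(K_n)=(n-2)I+J$ handles the easy direction; in the converse, Perron--Frobenius applied to the positive matrix $\mathcal{Q}(G)$ makes $\partial_1^{Q}(G)$ simple, yielding the rank-one decomposition $\mathcal{Q}(G)=\theta_2 I+c\,\mathbf{u}\mathbf{u}^{T}$, and comparing the diagonal entries with the row sums correctly places every $u_i$ on the single quadratic $2cx^{2}-cSx+\theta_2=0$, so at most two values occur. Two remarks on the elimination of the two-value case. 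First, in the subcase where $\beta$ occurs exactly once, your triangle-inequality step requires at least two $\alpha$-vertices, i.e. $n\geq 3$; the case $n=2$ is vacuous (the only connected graph on two vertices is $K_2$), but this should be said. Second, that subcase does not actually need integrality at all: since $\alpha$ and $\beta$ are the two distinct roots of the quadratic, Vieta gives $\alpha+\beta=S/2$ with $S=(n-1)\alpha+\beta$, hence $\beta=(n-3)\alpha$, which contradicts $0<\beta<\alpha$ for every $n\geq 2$ (it forces $\beta<0$, $\beta=0$, or $\beta\geq\alpha$); your detour through $c\alpha\beta=1$, $c\alpha^{2}=2$ and the non-integral $n=7/2$ is valid but longer. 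By contrast, your min-distance argument ($c\beta^{2}=1=c\alpha\beta$) is genuinely needed when $\beta$ occurs at least twice, since the Vieta identity is satisfied identically when, say, $n=4$ with two vertices carrying each value --- so your case split is exactly the right one, and the interplay of integrality, connectivity and Perron--Frobenius that you anticipated as the main obstacle is handled correctly.
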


Finally, we derive another new upper bound on the largest distance singless Laplacian eigenvalue.
\begin{theorem}\label{T14}
If $G$ is a connected graph of order $n$ then
\begin{equation}\label{CS7}
  \partial_1^{Q}(G) \leq \frac{2W(G)}{n} +\sqrt{\frac{n-1}{n}\bigg(\| \mathcal{Q}(G)\|_{F}^{2}-\frac{(2W(G))^{2}}{n}\bigg)}.
\end{equation}
The equality holds if and only if $G=K_n$.
\end{theorem}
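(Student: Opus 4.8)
The plan is to mirror the proof of Theorem~\ref{Tr}, but now applied to all $n$ distance signless Laplacian eigenvalues, since for $\mathcal{Q}(G)$ no eigenvalue is forced to vanish. The two facts I would start from are
\begin{equation*}
  \sum_{i=1}^{n}\partial_i^{Q}(G) = trace(\mathcal{Q}(G)) = 2W(G)
  \qquad\text{and}\qquad
  \sum_{i=1}^{n}\big(\partial_i^{Q}(G)\big)^{2} = \|\mathcal{Q}(G)\|_{F}^{2}.
\end{equation*}
These give $\sum_{i=1}^{n}\big(\partial_i^{Q}(G)-\frac{2W(G)}{n}\big)=0$, so the shifted eigenvalues $\partial_1^{Q}(G)-\frac{2W(G)}{n} \geq \cdots \geq \partial_n^{Q}(G)-\frac{2W(G)}{n}$ form a nonincreasing sequence summing to zero.

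First I would apply Lemma~\ref{OR2004} with $m=n$ to this shifted sequence, obtaining
\begin{equation*}
  \partial_1^{Q}(G)-\frac{2W(G)}{n} \ \leq \ \sqrt{\frac{n-1}{n}\sum_{i=1}^{n}\Big(\partial_i^{Q}(G)-\frac{2W(G)}{n}\Big)^{2}}.
\end{equation*}
Next I would expand the sum of squares exactly as in Theorem~\ref{Tr}: using the two identities above,
\begin{equation*}
  \sum_{i=1}^{n}\Big(\partial_i^{Q}(G)-\frac{2W(G)}{n}\Big)^{2}
  = \|\mathcal{Q}(G)\|_{F}^{2}-2\frac{(2W(G))^{2}}{n}+\frac{(2W(G))^{2}}{n}
  = \|\mathcal{Q}(G)\|_{F}^{2}-\frac{(2W(G))^{2}}{n}.
\end{equation*}
Substituting this into the previous display yields (\ref{CS7}) at once.

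For the equality characterization I would exploit the feature that distinguishes the signless Laplacian from the Laplacian case: since $\mathcal{Q}(G)$ is a positive matrix, Theorem~\ref{PF} guarantees that its spectral radius $\partial_1^{Q}(G)$ is a \emph{simple} eigenvalue. By the equality clause of Lemma~\ref{OR2004}, equality in (\ref{CS7}) holds if and only if $\partial_2^{Q}(G)=\cdots=\partial_n^{Q}(G)$. Together with the simplicity of $\partial_1^{Q}(G)$, this forces $\mathcal{Q}(G)$ to have exactly two distinct eigenvalues, which by Lemma~\ref{Aouc2} happens precisely when $G=K_n$. Conversely, since $K_n$ is $(n-1)$-transmission regular one has $\mathcal{Q}(K_n)=(n-2)I_n+\mathbf{1}\mathbf{1}^{T}$, whose spectrum consists of $2n-2$ (simple) and $n-2$ (with multiplicity $n-1$); these are two distinct values, so equality does hold for $K_n$.

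There is no genuine obstacle in this argument; it runs parallel to Theorem~\ref{Tr}. The only point needing care is the equality analysis. In contrast to the distance Laplacian bound, where the forced zero eigenvalue led to a split into the $K_n$ case (via Theorem~\ref{Han}) and a three-distinct-eigenvalue case, here the Perron--Frobenius simplicity of $\partial_1^{Q}(G)$ excludes the degenerate possibility that all eigenvalues coincide and collapses the characterization directly to the two-distinct-eigenvalue criterion of Lemma~\ref{Aouc2}, giving cleanly $G=K_n$.
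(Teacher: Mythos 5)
Your proposal is correct and follows essentially the same route as the paper's own proof: applying Lemma~\ref{OR2004} with $m=n$ to the shifted eigenvalues, expanding the sum of squares via the trace and Frobenius-norm identities, and then settling equality by combining the Perron--Frobenius simplicity of $\partial_1^{Q}(G)$ (since $\mathcal{Q}(G)$ is a positive matrix) with Lemma~\ref{Aouc2}. Even your converse verification via the spectrum $2n-2$, $n-2$ (multiplicity $n-1$) of $\mathcal{Q}(K_n)$ matches the paper's argument.
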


\begin{proof} \ \ We have that \
$\sum_{i=1}^{n}\partial_i^{Q}(G) = 2W(G)$ and $\sum_{i=1}^{n}(\partial_i^{Q}(G))^{2} = \|\mathcal{Q}(G)\|_{F}^{2}$. \ Then
\begin{equation*}
  \sum_{i=1}^{n}\bigg(\partial_i^{Q}(G)-\frac{2W(G)}{n}\bigg) = 0.
\end{equation*}
Applying Lemma \ref{OR2004}, we get
\begin{equation}\label{RSJ}
  \partial_1^{Q}(G) - \frac{2W(G)}{n} \leq \sqrt {\frac{n-1}{n}\sum_{i=1}^{n}\bigg(\partial_{i}^{Q}(G)-\frac{2W(G)}{n}\bigg)^{2}}
\end{equation}
with equality if and only if
\begin{equation}\label{x}
  \partial_{2}^{Q}(G)-\frac{2W(G)}{n} \ = \ \cdots \ = \ \partial_{n}^{Q}(G)-\frac{2W(G)}{n} \ = \ -\frac{\partial_{1}^{Q}(G)-\frac{2W(G)}{n}}{n-1}.
\end{equation}
Since
\begin{equation*}
  \sum_{i=1}^{n}\bigg(\partial_{i}^{Q}(G)-\frac{2W(G)}{n}\bigg)^{2}=
  \sum_{i=1}^{n}\big(\partial_{i}^{Q}(G)\big)^{2}-2\frac{2W(G)}{n}\sum_{i=1}^{n}\partial_{i}^{Q}(G)+n\big(\frac{2W(G)}{n}\big)^{2}
\end{equation*}
\begin{equation*}
  =\|\mathcal{Q}(G)\|_{F}^{2}-2\frac{\big(2W(G)\big)^{2}}{n}+\frac{\big(2W(G)\big)^{2}}{n}=\| \mathcal{Q}(G)\|_{F}^{2}-\frac{(2W(G))^{2}}{n},
\end{equation*}
 the upper bound (\ref{RSJ}) is equivalent to
 \begin{equation}\label{xx}
   \partial_1^{Q}(G) \leq \frac{2W(G)}{n} +\sqrt{\frac{n-1}{n}\bigg(\| \mathcal{Q}(G)\|_{F}^{2}-\frac{(2W(G))^{2}}{n}\bigg)}
 \end{equation}
 with the necessary and sufficient condition for the equality given in (\ref{x}). We claim that the equality in (\ref{xx}) holds if and only if $G=K_n$. Suppose the equality in (\ref{xx}). Then the eigenvalues $\partial_{2}^{Q}(G), \ldots, \partial_{n}^{Q}(G)$ satisfy (\ref{x}) and thus
 \begin{equation*}
   \partial_{2}^{Q}(G)= \ldots = \partial_{n}^{Q}(G).
 \end{equation*}
 Moreover, from the fact that $\mathcal{Q}(G)$ is a positive matrix, $\partial_1^{Q}(G)$ is a simple eigenvalue. Hence the equality in (\ref{xx}) implies that $G$ has only two distinct distance signless Laplacian eigenvalues. Hence, from Lemma \ref{Aouc2}, $G=K_{n}$. Conversely, using the fact that the distance signless Laplacian eigenvalues of $K_n$ are \ $\partial_1^{Q}(K_{n})=2n-2$ \ and \ $\partial_i^{Q}(K_{n})=n-2$, \ for $i=2,\ldots,n$, one can easily see that the equality in \eqref{xx} holds. The proof is complete.
\end{proof}

Finally, we apply some bounds presented in this section to the graphs given in the following example, in each vertex is indicated its corresponding transmission. These graphs are taken from \cite{Aou1} and they are the all regular but not transmission regular graphs on $12$ vertices. As before, the results are given to four decimal places.

\begin{example} Let $G_1$ be the graph
\begin{center}
\definecolor{qqqqff}{rgb}{0.,0.,1.}
\begin{tikzpicture}
\draw (0.,6.)-- (4.,6.);
\draw (0.,6.)-- (4.,5.);
\draw (0.,6.)-- (4.,4.);
\draw (0.,5.)-- (4.,6.);
\draw (0.,5.)-- (4.,5.);
\draw (0.,5.)-- (4.,4.);
\draw (0.,4.)-- (4.,6.);
\draw (0.,4.)-- (4.,4.);
\draw (0.,3.)-- (4.,5.);
\draw (0.,3.)-- (4.,3.);
\draw (0.,4.)-- (4.,1.);
\draw (0.,3.)-- (4.,2.);
\draw (0.,2.)-- (4.,3.);
\draw (0.,2.)-- (4.,2.);
\draw (0.,2.)-- (4.,1.);
\draw (0.,1.)-- (4.,3.);
\draw (0.,1.)-- (4.,2.);
\draw (0.,1.)-- (4.,1.);
\begin{scriptsize}
\draw [fill=qqqqff] (0.,1.) circle (2.5pt);
\draw[color=qqqqff] (0.05,1.3) node {$26$};
\draw [fill=qqqqff] (0.,2.) circle (2.5pt);
\draw[color=qqqqff] (0.05,2.3) node {$26$};
\draw [fill=qqqqff] (0.,3.) circle (2.5pt);
\draw[color=qqqqff] (0.05,3.3) node {$24$};
\draw [fill=qqqqff] (0.,4.) circle (2.5pt);
\draw[color=qqqqff] (0.05,4.3) node {$24$};
\draw [fill=qqqqff] (0.,5.) circle (2.5pt);
\draw[color=qqqqff] (0.05,5.3) node {$26$};
\draw [fill=qqqqff] (0.,6.) circle (2.5pt);
\draw[color=qqqqff] (0.05,6.3) node {$26$};
\draw [fill=qqqqff] (4.,1.) circle (2.5pt);
\draw[color=qqqqff] (4.05,1.3) node {$24$};
\draw [fill=qqqqff] (4.,2.) circle (2.5pt);
\draw[color=qqqqff] (4.05,2.3) node {$26$};
\draw [fill=qqqqff] (4.,3.) circle (2.5pt);
\draw[color=qqqqff] (4.05,3.3) node {$26$};
\draw [fill=qqqqff] (4.,4.) circle (2.5pt);
\draw[color=qqqqff] (4.05,4.3) node {$26$};
\draw [fill=qqqqff] (4.,5.) circle (2.5pt);
\draw[color=qqqqff] (4.05,5.3) node {$24$};
\draw [fill=qqqqff] (4.,6.) circle (2.5pt);
\draw[color=qqqqff] (4.05,6.3) node {$26$};
\draw[color=black] (2.06,5.84);
\draw[color=black] (1.98,5.36);
\draw[color=black] (1.9,4.88);
\draw[color=black] (2.12,5.36);
\draw[color=black] (2.06,4.84);
\draw[color=black] (1.98,4.36);
\draw[color=black] (2.2,4.88);
\draw[color=black] (2.06,3.84);
\draw[color=black] (2.2,3.88);
\draw[color=black] (2.06,2.84);
\draw[color=black] (1.86,2.42);
\draw[color=black] (1.98,2.36);
\draw[color=black] (2.12,2.36);
\draw[color=black] (2.06,1.84);
\draw[color=black] (1.98,1.36);
\draw[color=black] (2.2,1.88);
\draw[color=black] (2.12,1.36);
\draw[color=black] (2.06,0.84);
\end{scriptsize}
\end{tikzpicture}
\end{center}

For $G_1$, we have \ $\partial_1^{Q}(G_1)=50.8062$ \ and
\begin{center}
$
  \begin{array}{cccccccc}
    \text{Lower bounds:} & (\ref{i3}) & (\ref{i5}) & (\ref{CI5}) \\
   & 49 & 48.4974 & 48.4358 \\
  \end{array}
$
\end{center}
\begin{center}
$
  \begin{array}{cccccccc}
    \text{Upper bounds:} & (\ref{i4}) & (\ref{i6}) & (\ref{i2}) & (\ref{CS6}) & (\ref{CS7}) \\
   & 51.6923 & 51.8459 & 54.5307 & 51.7969 & 53.2578\\
  \end{array}
$
\end{center}

Consider now the graph $G_{2}$ displayed below.

\begin{center}
\definecolor{qqqqff}{rgb}{0.,0.,1.}
\begin{tikzpicture}
\draw (0.,6.)-- (4.,6.);
\draw (0.,6.)-- (4.,5.);
\draw (0.,6.)-- (4.,4.);
\draw (0.,5.)-- (4.,6.);
\draw (0.,5.)-- (4.,5.);
\draw (0.,5.)-- (4.,4.);
\draw (0.,4.)-- (4.,6.);
\draw (0.,4.)-- (4.,3.);
\draw (0.,3.)-- (4.,5.);
\draw (0.,3.)-- (4.,3.);
\draw (0.,4.)-- (4.,1.);
\draw (0.,3.)-- (4.,1.);
\draw (0.,2.)-- (4.,4.);
\draw (0.,2.)-- (4.,2.);
\draw (0.,2.)-- (4.,1.);
\draw (0.,1.)-- (4.,3.);
\draw (0.,1.)-- (4.,2.);
\draw (0.,1.)-- (4.,1.);
\begin{scriptsize}
\draw [fill=qqqqff] (0.,1.) circle (2.5pt);
\draw[color=qqqqff] (0.05,1.3) node {$26$};
\draw [fill=qqqqff] (0.,2.) circle (2.5pt);
\draw[color=qqqqff] (0.05,2.3) node {$22$};
\draw [fill=qqqqff] (0.,3.) circle (2.5pt);
\draw[color=qqqqff] (0.05,3.3) node {$22$};
\draw [fill=qqqqff] (0.,4.) circle (2.5pt);
\draw[color=qqqqff] (0.05,4.3) node {$22$};
\draw [fill=qqqqff] (0.,5.) circle (2.5pt);
\draw[color=qqqqff] (0.05,5.3) node {$24$};
\draw [fill=qqqqff] (0.,6.) circle (2.5pt);
\draw[color=qqqqff] (0.05,6.3) node {$24$};
\draw [fill=qqqqff] (4.,1.) circle (2.5pt);
\draw[color=qqqqff] (4.05,1.3) node {$24$};
\draw [fill=qqqqff] (4.,2.) circle (2.5pt);
\draw[color=qqqqff] (4.05,2.3) node {$24$};
\draw [fill=qqqqff] (4.,3.) circle (2.5pt);
\draw[color=qqqqff] (4.05,3.3) node {$24$};
\draw [fill=qqqqff] (4.,4.) circle (2.5pt);
\draw[color=qqqqff] (4.05,4.3) node {$24$};
\draw [fill=qqqqff] (4.,5.) circle (2.5pt);
\draw[color=qqqqff] (4.05,5.3) node {$24$};
\draw [fill=qqqqff] (4.,6.) circle (2.5pt);
\draw[color=qqqqff] (4.05,6.3) node {$24$};
\draw[color=black] (2.06,5.84);
\draw[color=black] (1.98,5.36);
\draw[color=black] (1.9,4.88);
\draw[color=black] (2.12,5.36);
\draw[color=black] (2.06,4.84);
\draw[color=black] (1.98,4.36);
\draw[color=black] (2.2,4.88);
\draw[color=black] (2.06,3.84);
\draw[color=black] (2.2,3.88);
\draw[color=black] (2.06,2.84);
\draw[color=black] (1.86,2.42);
\draw[color=black] (1.98,2.36);
\draw[color=black] (2.12,2.36);
\draw[color=black] (2.06,1.84);
\draw[color=black] (1.98,1.36);
\draw[color=black] (2.2,1.88);
\draw[color=black] (2.12,1.36);
\draw[color=black] (2.06,0.84);
\end{scriptsize}
\end{tikzpicture}
\end{center}
For $G_2$, we have \ $\partial_1^{Q}(G_2)=47.5268$ \ and
\begin{center}
$
  \begin{array}{cccccccc}
    \text{Lower bounds:} & (\ref{i3}) & (\ref{i5}) & (\ref{CI5}) \\
   & 45.6364 & 44.8107 & 44.5918 \\
  \end{array}
$
\end{center}
\begin{center}
$
  \begin{array}{cccccccc}
    \text{Upper bounds:} & (\ref{i4}) & (\ref{i6}) & (\ref{i2}) & (\ref{CS6}) & (\ref{CS7}) \\
   & 50.1538 & 51.0686 & 54.5307 & 51.2847 & 49.6175\\
  \end{array}
$
\end{center}

Finally, let $G_{3}$ be the graph
\begin{center}
\definecolor{qqqqff}{rgb}{0.,0.,1.}
\begin{tikzpicture}
\draw (0.,6.)-- (4.,6.);
\draw (0.,6.)-- (4.,5.);
\draw (0.,6.)-- (4.,4.);
\draw (0.,5.)-- (4.,6.);
\draw (0.,5.)-- (4.,5.);
\draw (0.,5.)-- (4.,3.);
\draw (0.,4.)-- (4.,6.);
\draw (0.,4.)-- (4.,3.);
\draw (0.,3.)-- (4.,5.);
\draw (0.,3.)-- (4.,2.);
\draw (0.,4.)-- (4.,2.);
\draw (0.,3.)-- (4.,1.);
\draw (0.,2.)-- (4.,4.);
\draw (0.,2.)-- (4.,3.);
\draw (0.,2.)-- (4.,1.);
\draw (0.,1.)-- (4.,4.);
\draw (0.,1.)-- (4.,2.);
\draw (0.,1.)-- (4.,1.);
\begin{scriptsize}
\draw [fill=qqqqff] (0.,1.) circle (2.5pt);
\draw[color=qqqqff] (0.05,1.3) node {$24$};
\draw [fill=qqqqff] (0.,2.) circle (2.5pt);
\draw[color=qqqqff] (0.05,2.3) node {$22$};
\draw [fill=qqqqff] (0.,3.) circle (2.5pt);
\draw[color=qqqqff] (0.05,3.3) node {$22$};
\draw [fill=qqqqff] (0.,4.) circle (2.5pt);
\draw[color=qqqqff] (0.05,4.3) node {$22$};
\draw [fill=qqqqff] (0.,5.) circle (2.5pt);
\draw[color=qqqqff] (0.05,5.3) node {$24$};
\draw [fill=qqqqff] (0.,6.) circle (2.5pt);
\draw[color=qqqqff] (0.05,6.3) node {$22$};
\draw [fill=qqqqff] (4.,1.) circle (2.5pt);
\draw[color=qqqqff] (4.05,1.3) node {$24$};
\draw [fill=qqqqff] (4.,2.) circle (2.5pt);
\draw[color=qqqqff] (4.05,2.3) node {$22$};
\draw [fill=qqqqff] (4.,3.) circle (2.5pt);
\draw[color=qqqqff] (4.05,3.3) node {$22$};
\draw [fill=qqqqff] (4.,4.) circle (2.5pt);
\draw[color=qqqqff] (4.05,4.3) node {$22$};
\draw [fill=qqqqff] (4.,5.) circle (2.5pt);
\draw[color=qqqqff] (4.05,5.3) node {$22$};
\draw [fill=qqqqff] (4.,6.) circle (2.5pt);
\draw[color=qqqqff] (4.05,6.3) node {$24$};
\draw[color=black] (2.06,5.84);
\draw[color=black] (1.98,5.36);
\draw[color=black] (1.9,4.88);
\draw[color=black] (2.12,5.36);
\draw[color=black] (2.06,4.84);
\draw[color=black] (1.98,4.36);
\draw[color=black] (2.2,4.88);
\draw[color=black] (2.06,3.84);
\draw[color=black] (2.2,3.88);
\draw[color=black] (2.06,2.84);
\draw[color=black] (1.86,2.42);
\draw[color=black] (1.98,2.36);
\draw[color=black] (2.12,2.36);
\draw[color=black] (2.06,1.84);
\draw[color=black] (1.98,1.36);
\draw[color=black] (2.2,1.88);
\draw[color=black] (2.12,1.36);
\draw[color=black] (2.06,0.84);
\end{scriptsize}
\end{tikzpicture}
\end{center}

For $G_{3}$, we have that \ $\partial_1^{Q}(G_3)=45.4891$ \ and
\begin{center}
$
  \begin{array}{cccccccc}
    \text{Lower bounds:} & (\ref{i3}) & (\ref{i5}) & (\ref{CI5}) \\
   & 44.3636 & 44.1814 & 44.2380 \\
  \end{array}
$
\end{center}
\begin{center}
$
  \begin{array}{cccccccc}
    \text{Upper bounds:} & (\ref{i4}) & (\ref{i6}) & (\ref{i2}) & (\ref{CS6}) & (\ref{CS7}) \\
   & 47 & 47.4974 & 50.1151 & 47.5590 & 47.2386 \\
  \end{array}
$
\end{center}
\end{example}

An immediate conclusion from the above example is that the lower bounds (\ref{i5}) and (\ref{CI5}) as well as the upper bounds (\ref{i6}) and (\ref{CS6}) are not comparable. In fact, (i) (\ref{CI5}) gives a better lower bound for $\partial_1^{Q}(G_3)$ than (\ref{i5}) does, but this is not the case for the graphs $G_1$ and $G_2$ in which (\ref{i5}) gives better lower bounds and (ii) (\ref{CS6}) gives a better upper bound for $\partial_1^{Q}(G_1)$ than (\ref{i6}) does, but this is not the case for the graphs $G_2$ and $G_3$ in which (\ref{i6}) gives better upper bounds. Finally, we recall if $G$ is $k-$ transmission regular graph, that is, if $t=T=k$ then $\partial_1^{Q}(G)=2k$. Then, we can expect for tight bounds in (\ref{CI5}) and (\ref{CS6}) if $0 < T-t \leq 2$.
\\ \\
\textbf{Acknowledgements. } The research of R. D\'iaz was supported by Conicyt-Fondecyt de Postdoctorado 2017 $N^{o}$ 3170065, Chile. The research of A. Julio was supported by Conicyt-PAI 79160002, 2016, Chile. The research of O. Rojo was supported by Project Fondecyt Regular 1170313, Chile.
 \bigskip

\bigskip

E-mails:

\ \ \ Roberto D\'{i}az - \textit{rdiaz01@ucn.cl}

\ \ \ Ana Julio - \textit{ajulio@ucn.cl}

\ \ \ Oscar Rojo - \textit{orojo@ucn.cl}

\end{document}